\documentclass[12pt]{amsart}
\usepackage{amsmath,amssymb,amsthm}
\usepackage{amscd}
\usepackage{mathrsfs}

\usepackage{comment}
\usepackage{graphicx}
\usepackage{color}
\usepackage[abs]{overpic} 
\usepackage{ulem} 
\usepackage[dvipdfm,left=25truemm,right=25truemm,top=30truemm,bottom=30truemm]{geometry}

\usepackage{bm}

\theoremstyle{plain} 
\newtheorem{theorem}{Theorem}[section] 
\newtheorem{lemma}[theorem]{Lemma}

\newtheorem{proposition}[theorem]{Proposition}

\theoremstyle{definition}
\newtheorem{definition}[theorem]{Definition}
\newtheorem{remark}[theorem]{Remark}

\makeatletter
\def\Bline{%
\noalign{\ifnum0=`}\fi\hrule \@height 1pt \futurelet
\reserved@a\@xhline}
\@addtoreset{equation}{section}

\makeatother

\allowdisplaybreaks[3]

\newcommand{\textR}[1]{#1}

\newcommand{\R}{\mathbb{R}}

\newcommand{\pd}[2]{\dfrac{\partial#1}{\partial#2}}

\newcommand{\vp}{\mathbf{p}}

\renewcommand{\phi}{\varphi}
\renewcommand{\epsilon}{\varepsilon}

\newcommand{\wtilde}[1]{\widetilde{#1}}

\renewcommand{\geq}{\geqslant}
\renewcommand{\leq}{\leqslant}

\newcommand{\dsum}[2]{\displaystyle \sum_{#1}^{#2}}

\newcommand{\ma}{\mathcal{A}}
\newcommand{\wt}[1]{\widetilde{#1}}
\newcommand{\tk}[1]{\widetilde{\kappa}_{#1}}
\newcommand{\tv}[1]{\widetilde{\bm{v}}_{#1}}

\begin{document}

\title[Distance squared functions on singular surfaces with $S_k$,
$B_k$, $C_k$ and $F_4$ singularities]
{Distance squared functions on singular surfaces parameterized by
smooth maps $\mathcal{A}$-equivalent to $S_k$, $B_k$, $C_k$ and $F_4$.}

\author[T.~Fukui and M.~Hasegawa]
{Toshizumi Fukui and Masaru Hasegawa}

\address[Toshizumi Fukui]{%
Departmet of Mathematics, Faculty of Science, Saitama University, 
Saitama, 338-8570, JAPAN.
}
\email{tfuku@rimath.saitama-u.ac.jp}

\address[Masaru Hasegawa]{%
Depart of Information Science,
Center for Liberal Arts and Science,
Iwate Medical University,
Iwate 028-3694, JAPAN.
}
\email{mhase@iwate-med.ac.jp}

\subjclass[2020]{%
 Primary 53A05; 
 Secondary 58K05 
}

\keywords{Singular surfaces, Distance squared functions, $\ma$-simple
map-germs.} 

\thanks{
The first author is partially supported by grant-in-Aid in Science
26287011.
The second author is supported by the FAPESP post-doctoral grant number
2013/02543-1 during a post-doctoral period at ICMC-USP} 

\textR{\begin{abstract}
We describe singularities of distance squared functions on singular surfaces in $\R^3$ parameterized by smooth map-germs $\mathcal{A}$-equivalent to one of $S_k$, $B_k$, $C_k$ and $F_4$ singularities in terms of extended geometric language via finite succession of blowing-ups. 
We investigate singularities of wave-fronts and caustics of such singular surfaces.
\end{abstract}}

\maketitle

\section{Introduction}

Let $f:(\R^2,0)\to(\R^3,0)$ be a smooth map-germ which locally defines a surface $S$ (possibly with singularities) in $\R^3$.
We consider the family $D:(\R^2,0)\times\R^3\to\R$ of functions on $S$ defined by 
\[
D(u,v,\vp) = \dfrac12\|f(u,v)-\vp\|^2,
\]
where $\vp \in \R^3$. 
We define $d_{\vp}(u,v)=D(u,v,\vp)$, which is the distance squared function on $S$ from the point $\vp$. 
In principle, this function measures contact of $S$ with spheres centered at $\vp$, and the family $D$ is $3$-parameter unfolding of $d_{\vp}$. 
We have investigated when $D$ is $\mathcal{K}$ and $\mathcal{R}^+$-versal for a regular surface \cite{FH2012-2} or a singular surface with a Whitney umbrella (cross cap) \cite{FH2012-1}. 
It is important to study the $\mathcal{K}$ and $\mathcal{R}^+$-versality of $D$, since the number of parameters of the unfolding determines diffeomorphism type of $\mathcal{K}$ and $\mathcal{R}^+$-versal unfoldings and thus diffeomorphism type of the discriminant set and bifurcation set of $D$. 
Since the discriminant sets of $D$ are wave-fronts of $S$ and the bifurcation sets of $D$ are caustics of $S$, this enables us to determine the diffeomorphism typs of the wave-fronts and the caustics. 
Next target is to generalize these results for a singular surface which is the image of an $\mathcal{A}$-simple map-germ. 
$\mathcal{A}$-simple map-germs are classified by Mond \cite{Mond1985} and the list of the classification is given in Table \ref{tab:A-simple}. 
In this paper, we investigate when $D$ is $\mathcal{K}$ and $\mathcal{R}^+$-versal for such singular surfaces except $H_k$.

\begin{table}[ht!]
\caption{Classes of $\mathcal{A}$-simple map-germs.}
\centering
\begin{tabular}{ccc}
\Bline
 Name & Normal form & $\mathcal{A}$-codim.\\\hline
 Immersion & $(x, y, 0)$ & $0$ \\
 Whitney umbrella ($S_0$) & $(x, y^2, x y)$ & $2$ \\
 $S_k^\pm$ & $(x, y^2, y^3 \pm x^{k+1} y)$,\, $k\geq1$ & $k+2$\\
 $B_k^\pm$ & $(x, y^2, x^2 y \pm y^{2k+1})$,\, $k\geq2$ & $k+2$\\
 $C_k^\pm$ & $(x, y^2, x y^3 \pm x^k y)$,\, $k\geq3$ & $k+2$\\
 $F_4$ & $(x, y^2, x^3 y + y^5)$ & $6$\\
 $H_k$ & $(x, x y + y^{3k-1}, y^3)$,\, $k\geq2$ & $k+2$\\\Bline
\end{tabular}\\
  (When $k$ is even, $S_k^+$ is equivalent to $S_k^-$, and $C_k^+$ to $C_k^-$.)
\label{tab:A-simple}
\end{table}

Our main result (Theorem 3.1) is to describe singularities of the distance-squared functions on our singular surfaces and conditions for their unfoldings being $\mathcal{K}$ and $\mathcal{R}^+$-versal in terms of differential geometry of the singular surfaces.  
As a consequence, we obtain criteria (Theorem 4.3) for types of singularities of wave-fronts and caustics of our singular surfaces. 
To do this, we introduce differential geometric language for such singular surfaces using finite succession of blowing-ups. 
The notions we introduce are not enough to describe differential geometry for a singular surface with $H_k$ and we think it is better to treat $H_k$ case separately. 
We plan to prepare another article for $H_k$ case. 
 
The paper is organized as follows.
In Section 2, we investigate the differential geometric information extended to singularities of our singular surfaces. 
In Section 3, we show the criteria for singularities of wave-fronts and caustics of the singular surfaces and investigate distance squared functions on the singular surfaces. 
In addition, we introduce focal loci which should be considered as analogy of focal conics of Whitney umbrellas. 
In Appendix A, we collect closed formulas for coefficients of differential geometric ingredients defined in Section 2, since these are often not short. 

\section{Differential geometry for singular surfaces}

Whitney \cite{Whitney} showed that smooth maps of $\R^2$ into $\R^3$ can have singularities which are not avoidable by small perturbation.
Such a singularity is called a Whitney umbrella or cross-cap (Figure \ref{fig:WH}).
Since Whitney umbrellas are stable singularities, it is natural to seek their geometry.
The extrinsic differential geometry of the Whitney umbrella is
investigated in \cite{BW1998, FH2012-1, FH2013, GGS2000, GS1986, NT2007, Oliver2011, Tari2007, West1995}, and in \cite{HHNSUY2014, HHNUY2012} its intrinsic properties are considered. 

\begin{figure}[ht!]
\centering
\includegraphics[width=0.25\textwidth,clip]{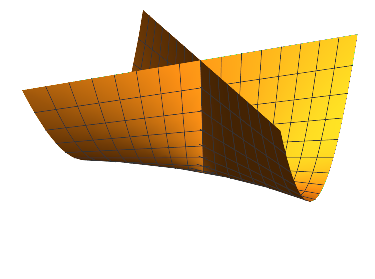}
\caption{The Whitney umbrella, $(u,v)\to(u, u v, v^2)$}
\label{fig:WH}
\end{figure}

Two map-germs $f, g:(\R^2,0)\to(\R^3,0)$ are said to be $\mathcal{A}$-equivalent if $g = \Phi \circ f \circ \phi^{-1}$ for some germs of diffeomorphisms $\phi$ and $\Phi$ of, respectively, the source and target. 
In \cite{Mond1985}, Mond classified smooth map-germs $(\R^2,0)\to(\R^3,0)$ under $\mathcal{A}$-equivalence and gave a list (Table \ref{tab:A-simple}) of normal forms of the map-germs.
Several authors tried to research in this direction, see for example \cite{MN-B2015, MN-B2009, O-ST2012, Saji2010}.

\subsection{Normal forms of corank 1 singularities.}
To analyze the differential geometry of a surface, relevant parameterizations of the surface are essential. 
However, we can not use the normal forms given in Table \ref{tab:A-simple} as its parameterization  because local differential geometry of the surfaces may not be preserved by diffeomorphisms in the target.
So we construct a parameterizasion by using changes of coordinates in the source and isometries in the target, which preserve the geometry of the surface. 

Given a map-germ $(\R^2,0)\to(\R^3,0)$ of corank 1 at the origin, we can make a change of coordinates in the source and a rotation in the target and write the germ in the form
\[
(u,v)\mapsto(u, y(u,v), z(u,v)),
\]
where $y$, $z\in\langle u,v \rangle_{\mathcal{E}_2}^2$.
Here, $\mathcal{E}_2$ is the local ring of smooth function germs of $(\R^2,0)\to\R$.

\begin{proposition}
\label{prop:normal_form}
Let $g:(\R^2,0)\to(\R^3,0)$ be a map-germ of corank 1 at the origin. 
Then, after using rotations in the target and changes of coordinates in  the source, we can reduce $g$ to the form
\begin{equation}
\label{eq:normal_form_corank1}
\left(u,\, \frac12 v^2 + \sum_{i=2}^k \frac{b_i}{i!} u^i + O(u,v)^{k+1},\, \frac12 a_{2,0} u^2 + \sum_{m=3}^k\sum_{i+j=m} \frac{a_{i,j}}{i!j!}u^i v^j + O(u,v)^{k+1}\right),
\end{equation}
if $j^2g(0)$ is $\ma$-equivalent to $(u,v^2,0)$, or
\begin{equation}
\label{eq:normal_form_corank1-2}
\left(u,\, u v + \sum_{i=3}^k \frac{b_i}{i!} v^i + O(u,v)^{k+1},\,  \frac12 a_{2,0} u^2 + \sum_{m=3}^k\sum_{i+j=m}\frac{a_{i,j}}{i!j!} u^i v^j + O(u,v)^{k+1}\right), 
\end{equation}
if $j^2g(0)$ is $\ma$-equivalent to $(u,u v,0)$.
\end{proposition}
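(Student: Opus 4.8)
The plan is to start from the preliminary form $(u,y,z)$ with $y,z\in\langle u,v\rangle_{\mathcal{E}_2}^2$ recorded just before the statement, and to normalize $y$ and $z$ separately, first at the quadratic level and then degree by degree, using only source changes that preserve the first coordinate, $(u,v)\mapsto(u,\psi(u,v))$, together with target rotations fixing the $u$-axis, i.e. rotations of the $yz$-plane. These are the only moves compatible with the shape of the normal form: a rotation tilting the $u$-axis would create a linear term in $y$ or $z$ that no subsequent source change could remove. Writing the quadratic parts of $y$ and $z$ as $A_1u^2+B_1uv+C_1v^2$ and $A_2u^2+B_2uv+C_2v^2$, I would first observe that the dichotomy is controlled by the $2$-jet of the curve $v\mapsto g(0,v)$, the restriction of $g$ to $\ker dg_0$: this $2$-jet is $(0,C_1v^2,C_2v^2)$, and its vanishing is $\ma$-invariant because $dg_0$ kills the $v$-direction. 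Hence $j^2g(0)\sim_{\ma}(u,v^2,0)$ forces $(C_1,C_2)\neq0$, whereas $j^2g(0)\sim_{\ma}(u,uv,0)$ forces $(C_1,C_2)=0$ and $(B_1,B_2)\neq0$.

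In the first case I would rotate the $yz$-plane so that $(C_1,C_2)$ goes to $(r,0)$ with $r>0$; then $z$ loses its $v^2$-term and the coefficient of $v^2$ in $y$ becomes positive, so $\partial_v^2y(0)\neq0$. The \emph{parametrized Morse lemma}, in its formal version up to order $k$, then supplies a source change $v\mapsto w(u,v)$ with $u$ fixed for which $y=\tfrac12w^2+\rho(u)$; expanding $\rho$ in powers of $u$ yields the pure tail $\sum b_iu^i/i!$ and hence the stated second component. The point that makes the third component fall into line is that the only order-$2$ monomial this source change can create in $z$ is a multiple of $B_2\,uw$, and the hypothesis is exactly what forces $B_2=0$: after these moves $j^2g(0)$ is $\ma$-equivalent to $(u,w^2,B_2uw)$, which is the cross-cap $S_0$ when $B_2\neq0$ and is $(u,v^2,0)$ when $B_2=0$, so the hypothesis rules out the former. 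Thus $z$ retains the pure quadratic $\tfrac12a_{2,0}u^2$ together with all of its higher-order terms, matching \eqref{eq:normal_form_corank1}.

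In the second case I would instead rotate the $yz$-plane so that $(B_1,B_2)\mapsto(r,0)$; this leaves $z$ with quadratic part $A_2u^2$ only and gives $y$ the quadratic part $A_1u^2+ruv$. A scaling of $v$ normalizes the coefficient of $uv$ to $1$, and the shear $v\mapsto v+A_1u$ clears the $u^2$-term, so that $y=uv+O(u,v)^3$ and $z=\tfrac12a_{2,0}u^2+O(u,v)^3$. I would then clean $y$ inductively: at degree $m$, the substitution $v\mapsto v+P_{m-1}(u,v)$ with $P_{m-1}$ homogeneous of degree $m-1$ alters the degree-$m$ part of $y$ by $uP_{m-1}$ while leaving the lower, already-normalized degrees fixed, and choosing $P_{m-1}$ to cancel precisely the $u$-divisible monomials leaves only the pure power $v^m$. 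Iterating for $m=3,\dots,k$ produces $y=uv+\sum b_iv^i/i!+O(u,v)^{k+1}$, and since these shears are $u$-preserving and do not touch the quadratic $\tfrac12a_{2,0}u^2$ of $z$, the form \eqref{eq:normal_form_corank1-2} follows.

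The crux is not any one computation but the coupling between the two components: one has to be sure that the move used to normalize $y$ does not revive the order-$2$ terms that the normal form forbids in $z$, and it is here that the precise $\ma$-type of $j^2g(0)$ is used—most transparently through the vanishing $B_2=0$ in the first case, the single place where the cross-cap is excluded. A secondary point is to run the Morse lemma and the inductive shears at the level of jets and then truncate modulo $O(u,v)^{k+1}$, which keeps everything polynomial and sidesteps any convergence question.
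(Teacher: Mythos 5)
Your proposal is correct and follows essentially the same route as the paper: a target rotation of the $yz$-plane isolating the $v^2$ (resp.\ $uv$) term, the observation that the hypothesis $j^2g(0)\sim_{\ma}(u,v^2,0)$ (as opposed to the cross-cap) kills the $uv$-coefficient of the third component, and then $u$-preserving source changes to normalize the second component. Your appeal to the parametrized Morse lemma is just a packaging of the paper's explicit degree-by-degree induction (and you additionally write out the second case, which the paper omits as similar), so the arguments are essentially identical.
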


\begin{proof}
We may assume that
\[
 j^2g(0) =
\left(u,\, \frac12 b_{2,0} u^2 + b_{1,1} u v + \frac12 b_{0,2} v^2,\, \frac12 a_{2,0} u^2 + a_{1,1} u v + \frac12 a_{0,2} v^2 \right).
\]
If $j^2g(0)$ is $\mathcal{A}$-equivalent to $(u,v^2,0)$, we can assume that $(a_{0,2},b_{0,2})\ne(0,0)$ and
\[
\begin{vmatrix}
  b_{1,1} & b_{0,2}\\
  a_{1,1} & a_{0,2}\\
\end{vmatrix} = 0.
\]
Let $R$ be the orthogonal matrix defined by 
\[
R=
\begin{pmatrix}
1 & 0 \\
0 & R_1
\end{pmatrix}
\quad\text{where}\quad R_1 = \frac1{\sqrt{a_{0,2}^2 + b_{0,2}^2}}
\begin{pmatrix}
b_{0,2} & a_{0,2}\\
-a_{0,2} & b_{0,2}
\end{pmatrix}.
\]
Then the 2-jet of $Rg$ is 
\[
\left(u,\, \frac{a_{2,0}\,b_{0,2} + a_{0,2}\,b_{2,0}}{2\sqrt{a_{0,2}^2 + b_{0,2}^2}} u^2 + \frac{a_{0,2}\,a_{1,1} + b_{2,0}\,b_{1,1}}{\sqrt{a_{0,2}^2 + b_{0,2}^2}} uv + \frac{\sqrt{a_{0,2}^2 + b_{0,2}^2}}2 v^2, \, \frac{a_{2,0}\,b_{0,2} - a_{0,2}\,b_{2,0}}{2\sqrt{a_{0,2}^2 + b_{0,2}^2}} u^2 \right).
\]
Substituting $v$ by $c_{1,0} u + c_{0,1} v$ and choosing suitable coefficients $c_{1,0}$ and $c_{0,1}$,
we show that the 2-jet of $Rg$ is
\[
\left(u,\, \frac12 b_{2,0}^* u^2 + \frac12 v^2,\, \frac{a_{2,0}\,b_{0,2} - a_{0,2}\,b_{2,0}}{2\sqrt{a_{0,2}^2 + b_{0,2}^2}} u^2 \right).
\]
This shows the first assertion for $k=2$.
We proceed by induction of $k$.
Assume that $g$ is in the form \eqref{eq:normal_form_corank1}.
Substituting $v$ by $v + \sum_{i+j=k} c_{i,j} u^i v^j/(i!j!)$, the second component of $g$ is 
\[
\frac12 v^2 + \frac{b_{k+1,0}}{(k+1)!} u^{k+1} + \sum_{i+j=k}
\left(\frac{b_{i-1,j}}{(i-1)!j!} + \frac{c_{i,j}}{i!j!}\right) u^{i+1} v^j + O(u,v)^{k+2}
\]
and we can choose $c_{i,j}$ so that the term $u^iv^j$ ($i+j=k+1$, $i\geq 1$) are zero, which conclude the first assertion.
We skip the proof of the second assertion, because the proof is similar to that of the first assertion. 
\end{proof}

\begin{proposition}
\label{prop:CriteriaOfA}
Necessary and sufficient conditions for $g$ given in \eqref{eq:normal_form_corank1} to be $\ma$-equivalent to one of $S_k$, $B_k$, $C_k$, and $F_4$ are as follows: 
\begin{align*}
S_1:&\quad \uwave{a_{2,1}\ne0},\ a_{0,3} \ne0,\\
S_{k\geq2}:&\quad \uwave{a_{2,1} = \cdots = a_{k,1} = 0,\
a_{k+1,1}\ne0},\ a_{0,3}\ne0,\\ 
B_2:&\quad a_{0,3} = 0,\ \uwave{a_{2,1} \ne0},\ 3a_{0,5}\,a_{2,1} - 5a_{1,3}^2 \ne0,\\
B_{k\geq3}:&\quad a_{0,3} = 0,\ \uwave{a_{2,1} \ne0},\ 3a_{0,5}\,a_{2,1} - 5a_{1,3}^2 = 0,\ \xi_3 = \cdots = \xi_{k-1} = 0,\ \xi_k \ne0,\\
C_k:&\quad a_{0,3} = 0,\ \uwave{a_{2,1} = \cdots = a_{k-1,1} = 0,\ a_{k,1} \ne0},\ a_{1,3} \ne0,\\
F_4:&\quad a_{0,3} = 0,\ \uwave{a_{2,1} = 0,\ a_{3,1} \ne0},\ a_{1,3} = 0,\ a_{0,5} \ne0,
\end{align*}
where
\[
\xi_n = \sum_{i=0}^n \sum_{j\geq 1} \dfrac{a_{i,2j-1}\,c_2^{m_2}\cdots c_n^{m_k}}{m_2!\cdots m_k!\,(2j-1)!},\quad \sum_{l=2}^n m_l = i,\quad \sum_{l=2}^n (l-1)m_l = n - j + 1
\]
and $c_2$, $\ldots$, $c_k$ are constants determined by 
\[
\sum_{i = 1}^n \sum_{j\geq 1} \dfrac{a_{i,2j-1}\,c_2^{l_2}\,c_3^{l_3}\cdots c_n^{l_n}}{l_2!\,l_3!\cdots l_n!\,(2n-1)!}=0,\quad \sum_{m=2}^n l_m= i -1,\quad \sum_{m=2}^n (m-1)l_m =n-j,\quad n=2,\ldots,k.
\]
\end{proposition}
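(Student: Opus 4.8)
The plan is to use $\ma$-equivalence to bring $g$ into the prenormal form $(u,\,v^2,\,v\,W(u,v^2))$ for a single smooth function germ $W(u,w)$ in variables $(u,w)$, and then to recover the $\ma$-type by comparing $W$ with the reductions of Mond's normal forms in Table \ref{tab:A-simple}: writing $w=v^2$ and cancelling the common factor $v$ turns $S_k$, $B_k$, $C_k$, $F_4$ respectively into $W=w\pm u^{k+1}$, $W=u^2\pm w^k$, $W=uw\pm u^k$ and $W=u^3+w^2$. Thus the entire classification is read off from the low-order structure of $W$, and the conditions in the statement are exactly the coefficient conditions that detect these four shapes.

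First I would perform the reduction. In \eqref{eq:normal_form_corank1} the second component is $\tfrac12 v^2$ plus a series in $u$ alone, so a translation $(X,Y,Z)\mapsto(X,\,Y-\beta(X),\,Z)$ in the target removes the $u$-series without disturbing the third component, and a rescaling of $v$ normalises the second component to $v^2$. Splitting the third component into its even and odd parts in $v$, the even part is a function of $(u,v^2)=(X,Y)$, hence is killed by a target change $(X,Y,Z)\mapsto(X,Y,Z-E(X,Y))$, leaving $v\,W(u,v^2)$. Tracking the surviving coefficients I expect $W(u,w)=\sum_{i\ge0,\,j\ge1}\frac{a_{i,2j-1}}{i!\,(2j-1)!}\,u^i w^{j-1}$; in particular $W(u,0)=\sum_{i\ge2}\frac{a_{i,1}}{i!}u^i$ records the $a_{i,1}$, the coefficient of $w$ is $a_{0,3}/6$, that of $uw$ is $a_{1,3}/6$, and that of $w^2$ is $a_{0,5}/120$.

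Next I would separate the regimes governed by $W$. If $\partial W/\partial w(0)\ne0$, i.e.\ $a_{0,3}\ne0$, then $W$ is a submersion in $w$ and reduces to $w\pm u^{k+1}$, where $u^{k+1}$ is the leading term of $W(u,0)$ (this last reduction uses the finite determinacy underlying Mond's normal forms); this produces exactly the $S_k$ conditions on the $a_{i,1}$. If $a_{0,3}=0$ but $a_{2,1}\ne0$, then $W(u,0)$ is Morse in $u$ and I complete the square: the substitution $u\mapsto u+c_2w+c_3w^2+\cdots$, with the $c_m$ solving the displayed triangular system (each equation being the vanishing of the coefficient of $u\,w^{\,n-1}$), removes all cross terms and leaves $\pm u^2$ together with a pure series in $w$ whose $w^n$-coefficient is precisely $\xi_n$. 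The first nonvanishing $\xi_n$ then identifies $B_k$, with $\xi_2$ a nonzero multiple of $3a_{0,5}a_{2,1}-5a_{1,3}^2$ giving the case $B_2$. Finally, when $a_{2,1}=0$ the square cannot be completed and the branch splits by the leading term of $W(u,0)$ together with the $w$-direction data: $a_{2,1}=\cdots=a_{k-1,1}=0$, $a_{k,1}\ne0$ with a surviving $uw$ term ($a_{1,3}\ne0$) reduces $W$ to $uw\pm u^k$, the $C_k$ family, whereas $a_{3,1}\ne0$ with $a_{1,3}=0$ and $a_{0,5}\ne0$ reduces $W$ to $u^3+w^2$, the $F_4$ case. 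In each branch the conditions are $\ma$-invariant because they are determined by the reduced germ $W$, and one checks directly that the four branches are mutually exclusive and exhaust Mond's list.

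The hard part will be the bookkeeping in the $B_k$ branch. I must prove that completing the square can be iterated to all orders without reintroducing lower cross terms, that the coefficients $c_2,\dots,c_k$ are exactly the solution of the stated weighted system (its constraints $\sum_{m=2}^n l_m=i-1$, $\sum_{m=2}^n(m-1)l_m=n-j$ expressing that $c_m$ enters as the coefficient of $w^{\,m-1}$ in the substitution), and that the residual coefficient of $w^n$ is the $\xi_n$ written in the statement, with its constraints $\sum_{l=2}^n m_l=i$, $\sum_{l=2}^n(l-1)m_l=n-j+1$ and the combinatorial factorials all correct. The explicit computation confirming $\xi_2\propto 3a_{0,5}a_{2,1}-5a_{1,3}^2$ serves as a check on this recursion, whereas the $S_k$, $C_k$ and $F_4$ branches require only a leading-coefficient reading after the even/odd splitting; so the substance of the proof is concentrated in the $B_k$ recursion.
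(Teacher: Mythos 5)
Your proposal is correct and follows essentially the same route as the paper's proof: the paper's left coordinate changes (removing the $u$-series from the second component and all terms even in $v$ from the third) amount exactly to your reduction to the prenormal form $(u,v^2,v\,W(u,v^2))$, and your completion of the square in the $B_k$ branch is literally the paper's substitution $u\mapsto u+\sum c_i v^{2(i-1)}$, with the same triangular system determining the $c_i$ and the same identification of $\xi_n$ as the residual coefficient of $w^n$ (in particular $\xi_2=(3a_{0,5}a_{2,1}-5a_{1,3}^2)/(360a_{2,1})$). The only difference is organizational: you perform the target changes once and run the case analysis on the planar germ $W(u,w)$, whereas the paper repeats the jet-level reductions case by case using Mond's determinacy degrees.
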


\begin{remark}
These criteria are also shown in [16, page 707]. 
However, their criterion are not complete since they describe the criteria up to 4-jets.
We obtained Proposition \ref{prop:CriteriaOfA} without knowing their result and feel better 
to present our proof for completeness.
\end{remark}

\begin{proof}
[Proof of Proposition \ref{prop:CriteriaOfA}]
Note that the determinacy degrees of $S_k$, $B_k$, $C_k$ and $F_4$ are shown in \cite[Table 3]{Mond1985}.

For $S_1$\,:
We first remark that $S_1$-singularity is 3-$\mathcal{A}$-determined.
The left coordinate changes
\begin{align}
\label{eq:CoordinateChanges01}
\hat{y} = y - \sum_{i=2}^3 \frac{b_i}{i!} x^i,\quad \hat{z} = z - \sum_{i=2}^3 \frac{a_{i,0}}{i!} x^i - a_{1,2}\left(y - \sum_{i=2}^3 \frac{b_i}i x^i \right)
\end{align}
reduces $j^3g(0)$ to
\[
\left(u,\, \frac12 v^2,\, \frac12 a_{2,1} u^2 v + \frac16 a_{0,3} v^3\right),
\]
and this implies that $g$ is $\mathcal{A}$-equivalent to $S_1$ if and only if $a_{2,1} \ne0$ and $a_{0,3} \ne0$.\\
\indent
For $S_{\geq2}$\,: 
We first remark that $S_k$ is $(k+2)$-determined.
By the left coordinate changes like \eqref{eq:CoordinateChanges01}, we
reduce $j^{k+2}g(0)$ to
\begin{align}
\label{eq:kjet}
\left(u,\, \frac12 v^2,\, \sum_{i+2j=2}^{k+1} \frac{a_{i,2j+1}}{i!(2j+1)!} u^i v^{2j+1} \right),\quad (i,\ j\geq 0).
\end{align}
If $g$ is $\mathcal{A}$-equivalent to $S_k$, we may assume that $a_{0,3} \ne0$.
Since $a_{0,3} \ne0$, we can choose $c$ so that the coefficients of $u^i v^{2j+3m}$ $(i+j\geq 1 ,m\geq 1)$ of the third component of \eqref{eq:kjet} are zero by the left coordinate change $\hat{z} = z - c x^i y^j z^m$.
Hence we reduce \eqref{eq:kjet} to
\[
\left(u,\, \frac12 v^2,\, \sum_{i=2}^{k+1} \frac{a_{i,1}}{i!} u^i v + \frac16 a_{0,3} v^3\right),
\]
and we have $a_{2,1} = \cdots = a_{k,1} = 0$ and $a_{k+1,1} \ne0$.
On the other hand, if $a_{2,1} = \cdots = a_{k,1} = 0$, $a_{k+1,1} \ne0$ and $a_{0,3} \ne0$, then we can reduce \eqref{eq:kjet} to
\[
\left(u,\,
\frac12 v^2,\, \frac{a_{k+1,1}}{(k+1)!} u^{k+1} v + \frac16 a_{0,3} v^3\right)
\]
by the above left coordinate change $\hat{z} = z - c x^i y^j z^m$, and this implies that $g$ is $\mathcal{A}$-equivalent to $S_k$-singularity.\\
\indent
For $B_2$\,:
We first remark that $B_2$-singularity is 5-$\mathcal{A}$-determined.
By the left coordinate changes like \eqref{eq:CoordinateChanges01}, we reduce $j^5g(0)$ to
\begin{align}
\label{eq:5jetOfB2}
\text{\small{$
\left(u,\, \frac12 v^2,\, \frac12 a_{2,1} u^2 v + \frac16 a_{0,3} v^3 + \frac16 a_{3,1} u^3 v + \frac16a_{1,3}u v^3 + \frac1{24} a_{4,1} u^4 v + \frac1{1,2} a_{2,3} u^2 v^3 + \frac1{120} a_{0,5} v^5\right).$}}
\end{align}
If $g$ is $\mathcal{A}$-equivalent to $B_2$, we may assume that $a_{2,1}\ne0$.
Since $a_{2,1}\ne0$, substituting $u$ by $u-a_{1,3}v^2/(6a_{2,1})$,
we reduce the coefficient of $u v^3$ of the third component of \eqref{eq:5jetOfB2} to zero. 
Moreover, using the left coordinate change $\hat{z} = z - c x^i y^j z^m$ $(i+j\geq 1, m\geq 1)$ and choosing a suitable coefficient $c$, we can reduce the coefficients of $u^{i+2m}v^{2j+m}$ of the third component of \eqref{eq:5jetOfB2} to zero.
Hence we reduce \eqref{eq:5jetOfB2} to
\[
\left(u,\, \frac12 v^2,\, \frac12 a_{2,1} u^2 v + \frac16 a_{0,3}v^3 + \frac{3a_{2,1}\,a_{0,5} - 5a_{1,3}^2}{360a_{2,1}} v^5\right),
\]
and we have $a_{0,3} = 0$ and $3a_{2,1}\,a_{0,5} - 5a_{1,3}^2 \ne0$.
On the other hand, if $a_{2,1} \ne0$, $a_{0,3} = 0$ and $3a_{2,1}\,a_{0,5} - 5a_{1,3}^2 \ne0$, then by the above changes of coordinate of the source and the target we can reduce
\eqref{eq:5jetOfB2} to 
\[
\left(u,\,
\frac12 v^2,\, \frac12 a_{2,1} u^2 v + \frac{3a_{2,1}\,a_{0,5} - 5a_{1,3}^2}{360a_{2,1}} v^5\right), 
\]
and we conclude that $g$ is $\mathcal{A}$-equivalent to $B_2$. \\
\indent
For $B_{\geq 3}$\,:
First, we remark that $B_k$ singularity is $(2k+1)$-determined.
By the left coordinate changes
\[
\hat x = x,\quad \hat y = y - \sum_{i=2}^{2k+1}\dfrac{b_i}{i!}x^i,\quad \hat z = z - \sum_{i+2j=2}^{2k+1}\dfrac{2a_{i,2j}}{i!(2j)!}x^i \left(y - \sum_{i=2}^{2k+1}\dfrac{b_i}{i!}x^i\right)^j,
\]
the coefficients $u^i$ of the second component and $u^i v^{2j}$ of the third component of $j^{2k+1}f(0)$ became to 0, and thus $j^{2k+1}f(0)$ reduces to
\begin{equation}
\label{eq:2k+1jet}
\left(u,\, \frac12v^2,\, \dfrac{a_{2,1}}2u^2 v + \dfrac{a_{0,3}}6 v^3 + \sum_{i+2j=3}^{2k}\frac{a_{i,2j+1}}{i!(2j+1)!}u^iv^{2j+1}\right).
\end{equation}
Remark that this coordinate changes does not change all coefficients of the third component except coefficients $u^i v^{2j}$.
 
Assume that $f$ is $\mathcal{A}$-equivalent to $B_k$ singularity.
From \eqref{eq:2k+1jet}, we have $a_{2,1} \ne 0$ and $a_{0,3} = 0$.
Replacing $u$ by $u+\sum_{i=2}^k c_i v^{2(i-1)}$, we write $j^{2k+1}f(0)$ as 
\begin{align}
\label{eq:2k+1jet2}
\Biggl(&u + \sum_{i=2}^k c_i v^{2(i-1)},\, \dfrac12 v^2,\,  \dfrac12 a_{2,1} u^2 v + \sum_{i+j=4}^{2k+1}\hat a_{i,j}u^i v^j\Biggr). 
\end{align}
Since $a_{2,1}\ne0$, we can choose $c_2$, $\ldots$, $c_k$ so that the coefficients of $u v^3$, $\ldots$, $u v^{2k-1}$ of the third component of \eqref{eq:2k+1jet2} became to zero. 
In fact, since the coefficients $\hat a_{1,3}$, $\ldots$, $\hat a_{1,2k-1}$ of $u v^3$, $\ldots$, $u v^{2k-1}$ of the third component are give by
\begin{align}
\label{eq:a13} \hat a_{1,3} & = \dfrac{a_{1,3}}{3!} + \dfrac{a_{2,1}}{2!}\dfrac{2!}{1!1!}c_2,\\
\hat a_{1,5} & = \dfrac{a_{1,5}}{5!} + \dfrac{a_{2,1}}{2!}\dfrac{2!}{1!1!}c_4 + \dfrac{a_{2,3}}{2!3!}\dfrac{2!}{1!1!}c_2 + \dfrac{a_{3,1}}{3!}\dfrac{3!}{1!2!}c_2^2,\\
\nonumber
&\quad\vdots\\
\hat a_{1,2k-1} & = \sum_{i = 1}^k \sum_{j\geq 1}\dfrac{a_{i,2j-1}\,c_2^{l_2}\,c_3^{l_3}\cdots c_k^{l_k}}{l_2!\,l_3!\cdots l_k!\,(2k-1)!}\quad \left(\sum_{m=2}^k l_m = i-1,\quad \sum_{m=2}^k (m-1)l_m = k-j\right),
\end{align}
$c_2$, $\ldots$, $c_k$ are determined by $\hat a_{1,3} = \cdots = \hat a_{1,2k-1} = 0$.
In particular, $c_2 = -a_{1,3}/(6a_{2,1})$.
The coefficients of $\hat a_{0,5}$, $\ldots$, $\hat a_{0,2k+1}$ of $v^5$, $\ldots$, $v^{2k+1}$ of the third component are given by
\begin{align}
\hat a_{0,5} & = \dfrac{a_{0,5}}{5!} + \dfrac{a_{1,3}}{3!}\dfrac{1!}{1!}c_2^1 + \dfrac{a_{2,1}}{2!}\dfrac{2!}{2!}c_2^2,\\
\hat a_{0,7} & = \dfrac{a_{0,7}}{7!} + \dfrac{a_{1,3}}{3!}\dfrac{1!}{1!}c_3^1 + \dfrac{a_{1,5}}{5!}\dfrac{1!}{1!}c_2^1 + \dfrac{a_{2,1}}{2!}\dfrac{2!}{1!1!}c_2^1 c_3^1 + \dfrac{a_{2,3}}{2!3!}\dfrac{2!}{2!}c_2^2 + \dfrac{a_{3,1}}{3!}\dfrac{3!}{3!}c_2^3,\\\nonumber
&\vdots\\
\hat a_{0,2k+1} & = \sum_{i=0}^k \sum_{j\geq 1} \dfrac{a_{i,2j-1}\,c_2^{m_2}\cdots c_k^{m_k}}{m_2!\cdots m_k!\,(2j-1)!}\quad \left(\sum_{l=2}^k m_l = i,\quad \sum_{l=2}^k (l-1)m_l = k-j+1\right).
\end{align}
In particular, $\hat a_{0,5} = (3a_{0,5}\,a_{2,1} - 5a_{1,3}^2)/(360a_{2,1})$. 
By the left change of coordinates
\[
\hat{\hat x} = \hat x - \sum_{i=2}^k 2^{i-1} c_i \hat y^{i-1},\,\quad \hat{\hat y} = \hat y,\,\quad \hat{\hat z} = \hat z - \sum c_{i,j,m} \left(\hat x - \sum_{i=2}^k 2^{i-1} c_i \hat y^{i-1} \right)^i \hat y^j \hat z^m, 
\]
coefficients of $u^2$, $\ldots$, $u^{2(k-1)}$ of the first component became to zero.
Moreover, since $a_{2,1} \ne 0$, we can choose $c_{i,j,m}$ so that the coefficients of $u^{i+2m} v^{2j+m}$ $(i\geq 0,\,j\geq 0,\,m\geq1,\,4\leq i+2j+3m\leq 2k+1)$ of the third component became to zero, and thus the third component reduces to
\[
\dfrac12 a_{2,1}u^2 v + \hat a_{0,5} v^5 + \sum_{m=3}^k \sum_{i=2}^{m-1}\hat a_{0,2i-1}c_{1,m-i-1,1} u v^{2m-1} + \sum_{m=3}^k\left(\hat a_{0,2m+1} + \sum_{i=3}^m \hat a_{0,2i-1}c_{0,m-i+1,1} v^{2m+1}\right).
\]
Setting $\hat a_{0,2n+1} = \xi_n$, we obtain $\xi_2 = \xi_3 = \cdots = \xi_{k-1} = 0$, $\xi_k \ne 0$.

Conversely, if $a_{2,1} \ne0$, $a_{0,3} = \xi_2 = \cdots = \xi_k = 0$, and $\xi_k\ne0$, then by the above changes of coordinate of the source and the target we can reduce \eqref{eq:2k+1jet} to
\[
\left(u,\,\dfrac12 v^2,\, \dfrac12 a_{2,1} u^2 v + \xi_k v^{2k+1}\right).
\]
Hence, $f$ is $\mathcal{A}$-equivalent to $B_k$. 

\indent
For $C_{\geq 4}$\,:
We first remark that $C_k$-singularity is $(k+1)$-$\mathcal{A}$-determined.
If $g$ is $\mathcal{A}$-equivalent to $C_k$, we may assume that $a_{1,3} \ne0$.
Since $a_{1,3} \ne0$, substituting $u$ by $u - \frac{6a_{0,2i+3}}{(2i+3)!\,a_{1,3}} v^{2i}$ $(i\geq 1)$, we reduce the coefficient of $v^{2i+3}$ of the third component of $j^{k+1}g(0)$.
Moreover, we can choose $c$ so that the coefficients of $u^{i+m}v^{2j+3m}$ $(i+j\geq 1, m\geq 1)$ of the third component of  $j^{k+1}g(0)$ are zero by the left coordinate change  $\hat{z} = z - c x^i y^j z^m$. 
Hence we can reduce $j^{k+1}g(0)$ to
\[
\left(u,\, \frac12 v^2,\, \frac16 a_{0,3} v^3 + \frac16 a_{1,3}u v^3 + \sum_{i=2}^k \frac{a_{i,1}}{i!} u^i v\right).
\]
This shows that $a_{2,1} = \cdots = a_{k-1,1} = 0$, $a_{k,1} \ne0$ and $a_{0,3} = 0$.
Conversely, if $a_{2,1} = \cdots = a_{k-1,1} = 0$, $a_{k,1} \ne0$,
$a_{0,3} = 0$ and $a_{1,3} \ne0$, then by the above changes of coordinate of  the source and the target we can reduce $j^{k+1}g(0)$ to
\[
\left(u,\, \frac12 v^2,\, \frac16 a_{1,3} u v^3 + \frac{a_{k,1}}{k!} u^k v \right),
\]
and we conclude that $g$ is $\mathcal{A}$-equivalent to $C_k$.\\
\indent
For $F_4$\,:
We first remark that $F_4$-singularity is 5-$\mathcal{A}$-determined.
By the left coordinate changes like \eqref{eq:CoordinateChanges01}, we reduce $j^5g(0)$ to
\begin{align}
\label{eq:5jetOfF4}
\text{\small{$\left(u,\, \frac12 v^2,\, \frac12 a_{2,1} u^2 v + \frac16 a_{0,3} v^3 + \frac16 a_{3,1} u^3 v + \frac16 a_{1,3} u v^3 + \frac1{24} a_{4,1} u^4 v + \frac1{12} a_{2,3} u^2 v^3 + \frac1{120} a_{0,5} v^5\right).$}}
\end{align}
If $g$ is $\mathcal{A}$-equivalent to $F_4$, we may assume that $a_{3,1}\ne0$.
Since $a_{3,1}\ne0$, replacing $u$ to $u - a_{2,3}/(6a_{3,1}) v^2$, we see that the coefficient of $u^2 v^3$ of the third component of \eqref{eq:5jetOfF4} reduces to zero.
Moreover, by the left coordinate change $\hat{z} = z - a_{4,1}/(4a_{3,1}) x z$, we can reduce the coefficient of $u^4v$ of the third component of \eqref{eq:5jetOfF4} to zero.
Hence $j^5g(0)$ reduces to
\[
\left(u,\, \frac12 v^2,\, \frac16 a_{2,1} u^2 v + \frac16 a_{0,3} v^3 + \frac16 a_{3,1} u^3 v + \frac16 a_{1,3} u v^3 + \left(\frac1{120} a_{0,5} - \frac{a_{1,3}\,a_{2,3}}{36a_{3,1}}\right)v^5\right). 
\]
This implies that we have $a_{2,1} = a_{0,3} = a_{1,3} = 0$ and $a_{0,5} \ne0$.
Conversely, if $a_{2,1} = a_{0,3} = a_{1,3} = 0$, $a_{3,1} \ne0$ and $a_{0,5} \ne0$, then by the above changes of coordinate of
the source and the target we can reduce $j^5g(0)$ to
\[
\left(u,\, \frac12 v^2,\, \frac16 a_{3,1} u^3 v + \frac1{120} a_{0,5} v^5\right),
\]
and thus $g$ is $\mathcal{A}$-equivalent to $F_4$.
\end{proof}

\subsection{Basic notions of differntial geometry of singular surfaces with corank 1 singularities.}
  
Consider a singular surface $S$ parameterized by a smooth map-germ $g:(\R^2, 0)\to(\R^3, 0)$ of corank 1 at the origin $0$. 
At the singular point $g(0)$, the tangent plane degenerates to a line, that is, the image of $dg_0$ is a line.
We call such a line a {\it tangent line}. 
The plane passing through $g(0)$ perpendicular to the tangent line is called the {\it normal plane}.

\textR{We consider the orthogonal projection of $S$ onto the normal plane. 
The projection can be expressed as
\[
(\R^2,0)\to(\R^2,0),\quad(u,v)\mapsto(p(u,v),q(u,v)).
\]
We consider the group $\mathcal{G} = \mathrm{GL}_2(\R) \times \mathrm{GL}_2(\R)$ which acts on $(j^2 p, j^2 q)$. 
The list of $\mathcal{G}$-orbits is given in Table \ref{tab:singular_point} (see \cite{Gibson1979} for example). 
We classify the singular points of $S$ on the basis of the $\mathcal{G}$-class of $(j^2 p, j^2 q)$ in Table \ref{tab:singular_point}.
From Proposition \ref{prop:normal_form}, if $j^2 g(0)$ is $\mathcal{A}$-equivalent to $(u,v^2,0)$ then the singular point $g(0)$ is a hyperbolic, inflection or degenerate inflection point.
On the other hand, if $j^2 g(0)$ is $\mathcal{A}$-equivalent to $(u,u v,0)$, then the singular point $g(0)$ is either a parabolic or inflection point. 
\begin{table}[ht!]
\centering
\caption{The classification of the singular points.}
\begin{tabular}{cc}
\Bline
$\mathcal{G}$-class & Name \\\hline
$(x^2,y^2)$ & hyperbolic point\\
$(x y,x^2-y^2)$ & elliptic point\\
$(x^2,x y)$ & parabolic point\\
$(x^2\pm y^2,0)$ & inflection point\\
$(x^2,0)$ & degenerate inflection point\\
$(0,0)$ & degenerate inflection point\\\Bline
\end{tabular}
\label{tab:singular_point}
\end{table}}

There exists non-zero vector $\eta\in T_0\R^2$ such that $dg_0(\eta)=0$.
We call $\eta$ a {\it null vector} (cf. \cite{KRSUY2005,Saji2010}).
Suppose that $j^2{g}(0)$ is $\mathcal{A}$-equivalent to $(u,v^2,0)$.
The plane passing through $g(0)$ spanned by the tangent line and $\eta\eta g(0)$ is called the {\it principal plane}, where $\eta\eta g$ is the twice times directional derivative of $g$ with respect to $\eta$.
The vector in the normal plane is called the {\it principal normal vector} if the vector is normal to the principal plane. 

We remark that the definitions of the tangent line, normal plane, principal plane, principal normal vector and type of singular points are independent of the choice of coordinates in the source and choice of $\eta$. 

A regular plane curve in the parameter space passing through $(0,0)$ is called a {\it tangential curve} if it is transverse to $\eta$ at $(0,0)$. 
Let $\gamma(t)$ be a parameterization of the tangential curve. 
Clearly, $g\circ\gamma$ is tangent to the tangent line of the singular
surface. 
We denote $\Gamma$ by a family of tangential curves $\gamma$.
A member $\Gamma_0$ of the family is a {\it characteristic tangential curve} if the curvature of the orthogonal projection of $g\circ\Gamma_0$ onto the principal plane at $g(0)$ has an extremum value $\kappa_0$.
Note that tangential curves tangent to the characteristic tangential curve are characteristic tangential curves.

\begin{figure}[ht!]
\begin{minipage}[t]{0.49\textwidth}
\centering
\includegraphics[width=0.75\textwidth,clip]{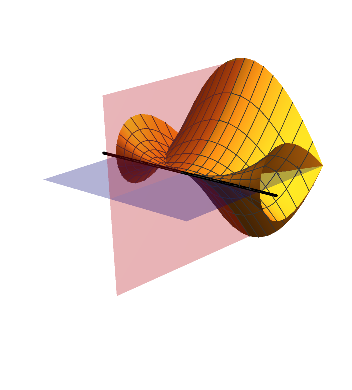}
\end{minipage}\hfill
\begin{minipage}[t]{0.49\textwidth}
\centering
\includegraphics[width=0.75\textwidth,clip]{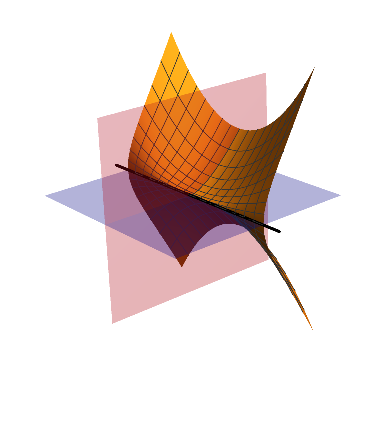}
\end{minipage}
\caption{The tangent line, normal plane and principal plane of $S^-$  (left) and $S^+$ (right).}
 \end{figure}

\begin{remark}
\label{rem:Normal}
Assume that a singular surface is parameterized by $g:(\R^2,0)\to(\R^3,0)$ given in \eqref{eq:normal_form_corank1}.  
We can easily show that the tangent line is the $x$-axis and the normal plane is the $yz$-plane, where $(x,y,z)$ is the usual
Cartesian  coordinate system of $\R^3$. 
Furthermore, the null vector can be chosen as $\eta = \partial_v$, and thus the principal plane is the $xy$-plane $\pm \partial_z$ are the principal normal vectors.

We can take  $\Gamma=(u, c_1 u + c_2 u^2 + O(u^3))$ as the family of tangential curves.
The 2-jet of $g\circ\Gamma$ are given by $(u, (b_2 + c_1^2) u^2/2, a_{2,0} u^2/2)$.
It follows that tangential curves tangent to the $u$-axis are the characteristic tangential curves, and thus the singular point $g(0)$ is an inflection (resp. degenerate inflection) point if and only if $a_{2,0} = 0$ (resp. $a_{2,0} = b_2 = 0$).
By using the above argument, it is easily shown that the singular point $g(0)$ is an inflection point if and only if $g\circ\gamma$ have at least 3-point contact (inflectional tangent) with the principal plane at $g(0)$, and that the inflection point $g(0)$ is degenerate if and only if $\kappa_0 = 0$.
\end{remark}

\subsection{Extended geometric properties of singular surfaces via blowing-ups}
  
Let $S$ be a singular surface parameterized by $g$ in \eqref{eq:normal_form_corank1}, and let $g$ be $\mathcal{A}$-equivalent to one of $S_k$, $B_k$, $C_k$ and $F_4$ singularities.
From Proposition \ref{prop:CriteriaOfA}, the condition that
\begin{align}
\label{eq:assumption}
a_{2,1} \ne0 \quad\text{or}\quad a_{2,1} = \cdots = a_{n,1} = 0,\ a_{n+1,1} \ne0  \quad\text{for some}\quad n\geq2. 
\end{align}
holds.
Consider maps
\[
\wt{\Pi}_{n+1}:\R\times S^1\to\R^2, \quad (r,\theta)\mapsto(r\cos\theta,r^{n+1}\cos^n\theta\sin\theta)\quad(n=1\text{ if }a_{21}\ne0),
\]
and
\[
\Pi_{n+1}:\mathcal{M}\to\R^2,\quad [(r,\theta)]\mapsto(r\cos\theta,r^{n+1}\cos^n\theta\sin\theta)\quad(n=1\text{ if }a_{21}\ne0), 
\]
where $\mathcal{M}=\R\times S^1/(r,\theta)\sim(-r,\theta+\pi)$.
The exceptional set $X=\Pi_{n+1}^{-1}(0,0)=\{(r,\theta)\,|\,r \cos\theta =0\}$.
Note that
\begin{equation}
\label{eq:pullback}
{\wt{\Pi}_{n+1}}^*u^i v^j=(u^i v^j)\circ\wt{\Pi}_{n+1}=r^{i+j+j n}\cos^{i+j n}\theta\sin^j\theta.
\end{equation}

Set
\[
A_m = A_m(u,v) = \sum_{i+j=m}\dfrac{a_{i,j}}{i!j!}u^i v^j,\quad
A = A(u,v) = \sum_{m = 3}^k A_m,\quad
B = B(u) = \sum_{i = 3}\dfrac{b_i}{i!}u^i.
\]
We have
\begin{equation}
\label{eq:FirstDerivatives}
g_u = (1, B_u, a_{2,0} u + A_u), \quad
g_v = (0, v, A_v),
\end{equation}
and we thus obtain
\begin{equation}
\label{eq:Cross}
g_u \times g_v = (A_v B_u - a_{2,0} u v - v A_v, -A_v, v).
\end{equation}
Therfore, we have
\begin{align}
\label{eq:blow_ext}
\begin{split}
& \text{\footnotesize{
$\wt{\Pi}^*_{n+1}(g_u\times g_v)$}} \\
& = \text{\footnotesize{$r^{n+1}\cos^n\theta\Biggl(\left(\frac{a_{n+1,1}\, b_2}{(n+1)!}\cos^2\theta  - a_{2,0}\cos\theta\sin\theta\right)r$}}\\
&\quad\quad\quad\quad\quad\quad\text{\footnotesize{$+\left(\dfrac{2 a_{n+2,1}\,b_2 + a_{n+1,1}\,b_3}{2(n+1)!}\cos^3\theta +\dfrac{2a_{1,2}\,b_2 - a_{3,0}}2\cos^2\theta\sin\theta\right) r^2 + O(r^3)$}},\\
&\quad\quad\quad\quad\quad\quad\text{\footnotesize{$ - \frac{a_{n+1,1}}{(n+1)!}\cos\theta + \left(-\frac{a_{n+2,1}}{(n+2)!}\cos^2\theta - a_{1,2}\cos\theta\sin\theta\right)r$
}}\\
&\quad\quad\quad\quad\quad\quad\quad\text{\footnotesize{$ + \left(\frac{a_{n+3,1}}{(n+3)!}\cos^2\theta + \frac12a_{2,2}\cos\theta\sin\theta - \varepsilon\frac12a_{0,3}\sin^2\theta\right)r^2 + O(r^3),\ \sin\theta\Biggr)$}},
\end{split}
\end{align}
where $\varepsilon=1$ if $n=1$, or $\varepsilon=0$ if $n\geq 2$.
Write the unit normal vector $\wt{\bm{n}} = \wt{\Pi}_{n+1}^*\bm{n}$ in the form
\[
\wt{\bm{n}}(r,\theta)=(n_1(r,\theta),\ n_2(r,\theta),\ n_3(r,\theta)).  
\]
Using \eqref{eq:blow_ext}, we show that $\wt{\bm{n}}$ can be extendible near the exceptional set $X$ and $n_i$ can be written in
\begin{align*}
\begin{split}
n_1 & = O(r),\\
n_2 & = n_{20} + n_{21}r + n_{22}r^2 + O(r^3),\\
n_3 & = n_{30} + n_{31}r + n_{32}r^2 + O(r^3),
\end{split}
\end{align*}
where
\[
n_{20} = -\dfrac{a_{n+1,1}\cos\theta}{\ma(\theta)},\quad
n_{30} = \dfrac{(n+1)!\sin\theta}{\ma(\theta)}, 
\]
and the coefficients $(n_{11}, n_{21}, n_{22}, n_{31}, n_{32})$ are trigonometric polynomials with coefficients depending on the $4$-jet and $a_{i,1}$ $(n+1\leq i \leq n+3)$ of $g$, expressed in \eqref{eq:blow_n21} to \eqref{eq:blow_n32} in Appendix A. 
Here, 
$$
\ma(\theta) = \sqrt{a_{n+1,1}^2\cos^\theta+((n+1)!)^2\sin^2\theta}.
$$
 
\begin{remark}
When the singular surface $S$ is parameterized by a map-germ $\mathcal{A}$-equivalent to $H_k$, we cannot obtain an extended unit normal vector of $S$ via such a map $\wt{\Pi}_{n+1}$, and the expressions of the second fundamental form below do not work. 
This is the reason why we avoid the case in this paper.  
\end{remark}

Assume that $\wt{\bm{n}}(0,\theta)$ is not the principal normal vector, that is, $\cos\theta \ne 0$. 
Let us obtain the pull backs of the coefficients $E$, $F$ and $G$ of the first fundamental form of $S$. 
From \eqref{eq:pullback} and \eqref{eq:FirstDerivatives} we have
\begin{align*}
\wt{\Pi}_{n+1}^* g_u & =\left(
1,\,b_2 r + \dfrac{b_3}2 r^2 +O(r^3),\, a_{2,0}r\cos\theta + a_{3,0}r^2\cos^2\theta + O(r^3)\right),\\
\wt{\Pi}_{n+1}^* g_v  & =r^{n+1}\left( 0,\,\cos^n\theta\sin\theta,\, \dfrac{a_{n+1,1}}{(n+1)!}\cos^{n+1}\theta + \left(\dfrac{a_{n+2,1}}{(n+2)!}\cos\theta + a_{1,2}\right)r\cos^{n+1}\theta\right),
\end{align*}
and thus
\begin{align}
\label{eq:pullback_of_E}
\wt{E} & = \wt{\Pi}^*_{n+1}E =1 + E_2 r^2 + O(r^3),\\
\wt{F} & = \wt{\Pi}^*_{n+1}F = r^{n+2}(F_0 + F_1 r + O(r^2)),\\
\label{eq:pullback_of_G}
\wt{G} & = \wt{\Pi}^*_{n+1}G = r^{2n+2}(G_0 + G_1 r + O(r^2)),
\end{align}
where
\begin{align*}
E_2 & =(a_{2,0}^2 + b_2^2)\cos^2\theta,\\
F_0 & = \left(\frac{a_{n+1,1}\, a_{2,0}}{(n+1)!}\cos\theta+ b_2\sin\theta\right)\cos^{n+1}\theta,\\
G_0 & = \left(\left(\frac{a_{n+1,1}}{(n+1)!}\right)^2\cos^2\theta + \sin^2\theta\right)\cos^{2n}\theta,\\
F_1 & = \left(\left(\frac{a_{n+2,1}\, a_{2,0}}{(n+2)!} +\frac{a_{n+1,1}\, a_{3,0}}{2(n+1)!}\right)\cos\theta + \left(a_{1,2}\,a_{2,0}+\frac12b_3\right)\sin\theta\right)\cos^{n+2}\theta,\\
G_1 & = \frac{2a_{n+1,1}}{(n+1)!}\left(\frac{a_{n+2,1}}{(n+2)!}\cos\theta +a_{1,2}\sin\theta\right)\cos^{2n+2}\theta.
\end{align*}

We now obtain the pull backs of the coefficients $L$, $M$ and $N$ of the second fundamental form of $S$.
We have
\begin{equation}
\label{eq:SecondDerivatives}
g_{u u} = (0, B_{u u}, a_{2,0} + A_{u u}),\quad
g_{u v} = (0, 0, A_{u v}),\quad
g_{v v} = (0, 1, A_{v v}).
\end{equation}
Using \eqref{eq:pullback} and \eqref{eq:SecondDerivatives}, we have
\begin{align*}
\wt{\Pi}_{n+1}^* g_{uu}& =\mbox{\footnotesize{$\left(0,\, \dsum{2}{4}\dfrac{b_i}{(i-2)!}r^{i-2}\cos^{i-2}\theta + O(r^3),\, \dsum{2}{4}\dfrac{a_{i,0}}{(i-2)!}r^{i-2}\cos^{i-2}\theta + a_{2,1} r^2 \cos\theta\sin\theta + O(r^3)\right)$}},\\
\wt{\Pi}_{n+1}^* g_{uv}& =\mbox{\small{$\left(0,\,0,\,\dsum{0}{2}\dfrac{a_{n+i+1,1}}{(n+i)!}r^{n+i}\cos^{n+i}\theta + \dsum{1}{2} a_{i,2} r^{n+i} \cos^{n+i-1}\theta\sin\theta + O(r^{n+3})\right)$}},\\
\wt{\Pi}_{n+1}^* g_{vv} & =\mbox{\small{$\left(0,\,1,\, \dsum{1}{2} \dfrac{a_{i,2}}{i!} r^i \cos^i\theta + \epsilon a_{0,3}r^2 \cos\theta \sin\theta + O(r^3)\right)$}},
\end{align*}
and thus
\begin{align}
\label{eq:pullback_of_L}
\wt{L} & = \wt{\Pi}^*_{n+1}L = L_0+L_1r+L_2r^2+O(r^3),\\
\wt{M} & = \wt{\Pi}^*_{n+1}M = r^n(M_0+M_1r+M_2r^2+O(r^3)),\\
\label{eq:pullback_of_N}
\wt{N} & = \wt{\Pi}^*_{n+1}N = N_0+N_1r+N_2r^2+O(r^3),
\end{align}
where
\begin{align*}
L_0 & = \frac{-a_{n+1,1}\, b_2\cos\theta +(n+1)!\,a_{2,0}\sin\theta}{\ma(\theta)},\\
M_0 & = \frac{(n+1)a_{n+1,1}\cos^n\theta\sin\theta}{\ma(\theta)},\\
N_0 & = -\frac{a_{n+1,1}\cos\theta}{\ma(\theta)},
\end{align*}
and the coefficients $(L_1, M_1, N_1, L_2, M_2, N_2)$ are trigonometric polynomials with coefficients depending on the 4-jet of $g$ and  $a_{i,1}$ $(n+1\leq i\leq n+3)$ expressed in \eqref{eq:blow_L1} to \eqref{eq:blow_N2} in Appendix A.

Since the Gaussian curvature $K$ is given by $K=(L N - M^2)/(E G - F^2)$, by using \eqref{eq:pullback_of_E} -- \eqref{eq:pullback_of_G} the Gaussian curvature $\wt{K} = \wt{\Pi}_{n+1}^*K$ in $(r,\theta)$ can be expressed as
\begin{align}
\label{eq:gaussian}
\wt{K} = \dfrac1{r^{2n+2}}(K_0 + K_1 r + K_2 r^2 + O(r^3)),
\end{align}
where
\begin{align*}
K_0 & = \dfrac{L_0 N_0}{G_0} =  \dfrac{((n+1)!)^2\,a_{n+1,1}(a_{n+1,1}b_2\cos\theta-(n+1)!\,a_{2,0}\sin\theta)}{\ma(\theta)^4\cos^{2n-1}\theta},\\
K_1 & = \dfrac{G_0 L_1 N_0 + G_0 N_0 N_1 - G_1 L_0 N_0 }{G_0^2},\\
K_2 & = \dfrac1{G_0^3}\left(F_0^2 G_0 L_2 N_0 + G_1^2 L_0 N_0 - E_2^2 G_0^2 L_0 N_0 - G_0 G_1 L_1 N_0\right.\\
&\left.\qquad\qquad + G_0^2 L_2 N_0 - G_0 G_1 L_0 N_1 + G_0^2 L_1 N_1 + G_0^2 L_0 N_2 - \epsilon G_0^2 M_0^2
\right).
\end{align*}
We say that a point $(0,\theta_0)$ is an \textit{elliptic}, \textit{hyperbolic} or \textit{parabolic point over the singularity} of $S$ if $r^{2n+2}\wt{K}(0,\theta_0) = K_0$ is positive, negative, or zero, respectively.

The principal curvatures $\kappa_1$ and $\kappa_2$ of $g$ are the roots of the equation
\[
(E G-F^2)\kappa+(-E N+2F M-G L)\kappa+(L N-M^2)=0.
\]
So $\kappa_i$ is given by
\[
\kappa_i = \dfrac{E L - 2F M + G L + \epsilon'\sqrt{(E L - 2F M + G L)^2-4(E G - F^2)(L N - M^2)}}{2(E G - F^2)},
\]
where $\epsilon' = 1$ if $i = 2$ or $\epsilon' = -1$ if $i = 1$.
We assume that $N_0>0$ (if $N_0<0$, we should change $\kappa_1$ with $\kappa_2$).
It follows from \eqref{eq:pullback_of_E} -- \eqref{eq:pullback_of_G} and \eqref{eq:pullback_of_L} -- \eqref{eq:pullback_of_N} that
\begin{align*}
\tk{1} & = \wt{\Pi}_{n+1}^*\kappa_1 = \dfrac1{r^{2n+2}}\left(\dfrac{N_0 - \sqrt{N_0^2}}{2G_0} + O(r)\right),\\
\tk{2} & = \wt{\Pi}_{n+1}^*\kappa_2 = \dfrac1{r^{2n+2}}\left(\dfrac{N_0 + \sqrt{N_0^2}}{2G_0} + O(r)\right).
\end{align*}
Hence, $\tk{2}$ can be expressed as
\begin{equation}
\label{eq:kappa2}
\tk{2} = \dfrac1{r^{2n+2}}(k_{20} + k_{21} r + k_{22} r^2 + O(r^3)),
\end{equation}
where
\[
k_{20} = \dfrac{N_0}{G_0} = -\frac{((n+1)!)^2a_{n+1, 1}}{\ma(\theta)^3 \cos^{2n - 1}\theta}.
\]
Since \eqref{eq:gaussian}, \eqref{eq:kappa2}, and $\wt{K} =
\tk{1}\tk{2}$, $\tk{1}$ can be expressed as
\begin{equation}
\label{eq:k1}
\tk{1} = k_{10} + k_{11} r + k_{12} r^2 + O(r^3).
\end{equation}
So we have
\begin{align*}
K_0 = k_{10} k_{20},\quad K_1 = k_{10} k_{21} + k_{11} k_{20},\quad K_2 = k_{10} k_{22} + k_{11} k_{21} + k_{12} k_{20}.
\end{align*}
These give
\begin{align}
k_{10} & = \dfrac{K_0}{k_{20}} = L_0 = \frac{-a_{n+1,1}\, b_2\cos\theta +(n+1)!\,a_{2,0}\sin\theta}{\ma(\theta)},\\
k_{11} & = \dfrac{K_1 - k_{10}k_{21}}{k_{20}} = L_1,\\
k_{12} & = \dfrac{K_2 - k_{10}k_{22} - k_{11}k_{21}}{k_{20}} = -E_2 L_0 + L_2 - \epsilon \dfrac{M_0^2}{N_0}.
\end{align}
The expressions of $k_{11}$ and $k_{12}$ expressed in the original coefficients in \eqref{eq:normal_form_corank1} are given respectively by \eqref{eq:blow_k11} and \eqref{eq:blow_k12} in Appendix.
 
The principal direction $(du,dv)$ corresponding to the principal curvature $\kappa_i$ is given by the equation
\[
\begin{pmatrix}
L & M \\
M & N
\end{pmatrix}
\begin{pmatrix}
du \\ dv
\end{pmatrix}
=
\kappa_i
\begin{pmatrix}
E & F \\
F & G
\end{pmatrix}
\begin{pmatrix}
du \\ dv
\end{pmatrix}.
\]
Hence, vectors $\bm{v}_i=(N-\kappa_iG)\partial_u-(M-\kappa_iF)\partial_v$ $(i=1,2)$ generate the principal directions corresponding to $\kappa_i$.
Since
\[
\wt{\Pi}_{n+1}(r,\theta) = (r \cos\theta, r^{n+1}\cos^n\theta \sin\theta) = (u,v),
\]
we have
\begin{align}
\label{eq:operator1}
\partial_u & = (\cos\theta-n\sin\theta\tan\theta)\partial_r - \frac1r(n+1)\sin\theta\partial_\theta,\\
\label{eq:operator2}
\partial_v & = \dfrac{\sin\theta}{r^n \cos\theta} \partial_r + \dfrac1{r^{n+1}}\cos^{1-n}\theta \partial_\theta.
\end{align}
Therefore, the lifted vectors $\tv{1}$ and $\tv{2}$, respectively, of $\bm{v}_1$ and $\bm{v}_2$ by $\wt{\Pi}_{n+1}$ are expressed as follows:
\begin{align}
\label{eq:v1}
\tv{1} & = (\xi_{10} + \xi_{11} r + O(r^2))\partial_r + (\eta_{10} + \eta_{11} r + O(r^2))\partial_\theta, \\
\label{eq:v2}
\tv{2} & = \dfrac1{r^{2n+1}}\left((\xi_{21} r + O(r^2)) \partial_r + (\eta_{20} + \eta_{21} r) \partial_\theta\right),
\end{align}
where
\begin{align*}
\xi_{10} &= N_0 (\cos\theta - n \sin\theta \tan\theta) - \dfrac{M_0\sin\theta}{\cos^n\theta}= -\frac{a_{n+1,1}}{\ma(\theta)}, \\
\xi_{11} & = N_1 (\cos\theta - n \sin\theta \tan\theta) - \dfrac{M_1 \sin\theta}{\cos^n\theta}, \\
\eta_{10} & = -(n+1)N_1\sin\theta - M_1\cos^{1-n}\theta = -\dfrac{((n+2)!\,a_{1,2}\sin\theta + a_{n+2,1}\cos\theta)\cos\theta\sin\theta}{(n+2)\ma(\theta)},\\
\eta_{11} & = -(n+1)N_2\sin\theta - (M_2 - k_{10} F_0) \cos^{1-n}\theta,\\
\xi_{21} & = - F_0 k_{20} \sin\theta = -\dfrac{(n+1)!\,a_{n+1,1} (a_{2,0}a_{n+1,1}\cos\theta + (n+1)!\,b_2\sin\theta)\sin\theta}{\ma(\theta)^3 \cos^{n-2}\theta},\\
\eta_{20} & = -F_0 k_{10} \cos\theta = -\dfrac{(n+1)!\,a_{n+1,1} (a_{2,0}a_{n+1,1}\cos\theta + (n+1)!\,b_2\sin\theta)}{\ma(\theta)^3 \cos^{n-1}\theta}.
\end{align*}
The expressions of $\xi_{11}$ and $\eta_{21}$ expressed in the original coefficients in \eqref{eq:normal_form_corank1} are given, respectively, by \eqref{eq:alpha} and \eqref{eq:beta} in Appendix. 

Since we have \eqref{eq:k1} and \eqref{eq:v1}, the first and second directional derivative of $\tk{1}$ along $\tv{1}$ can be expressed respectively as
\begin{align*}
\tv{1}\tk{1}(r,\theta)& = \xi_{10} k_{11} + \eta_{10} k_{10}' + 2\xi_{10}k_{12} + \xi_{11}k_{11} + \eta_{10}k_{11}' + \eta_{11}k_{10}' + O(r),\\
\tv{1}^2\tk{1}(r,\theta) &= 2\xi_{10}^2 k_{12} + \xi_{10} \xi_{11} k_{11} + \xi_{10} \xi_{11}k_{10}'\\
&\quad + 2 \xi_{10}\eta_{10}k_{11}' + \xi_{10}'\eta_{10}k_{11} + \eta_{10}\eta_{10}'k_{10} \eta_{11} k_{10}'' + O(r),
\end{align*}
where $'$ denotes the derivative with respect to $\theta$.
Moreover, the directional derivative of $\tk{1}$ along $\tv{2}$ can be expressed as
\[
\tv{2}\tk{1}(r,\theta)= \dfrac1{r^{2n+1}}(\eta_{20}k_{10}'+ O(r)).
\]
Therefore, we have
\begin{align*}
\tv{1}\tk{1}(r,\theta) & =
 \frac{a_{n+1,1}\Delta_1^{(n+1)}(\theta)\cos\theta}{\ma(\theta)^2}+O(r),\\
\tv{1}^2\tk{1}(r,\theta) & = \dfrac{a_{n+1,1} \bigl(a_{n+1,1}\Delta_2^{(n + 1)}(\theta)\cos\theta - (n + 1)!\, a_{1,2} \Delta_1^{(n+1)}(\theta)\sin\theta\bigr)\cos\theta}{\ma(\theta)^3} + O(r),\\ 
\tv{2}\tk{1}(r,\theta) & =\dfrac1{r^{2n+1}}\left(-\dfrac{\cos^{-2n+3}\theta((n+1)!)^2\,a_{n+1,1}^2\Delta_3^{(n+1)}(\theta)^2}{\ma(\theta)^3} + O(r)\right)
\end{align*}
where
\begin{align*}
\Delta_1^{(n+1)}(\theta) & = a_{n+1,1}\,b_3\cos\theta - (n+1)!\,a_{3,0}\sin\theta, \\
\Delta_2^{(n+1)}(\theta) & = - \bigl(a_{n+1,1}\,b_4 \cos\theta - (n+1)!\,a_{4,0} \sin\theta\bigr)\cos\theta\\
&\quad + 3(a_{2,0}^2 + b_2^2)\bigl(a_{n+1,1}\,b_2\cos\theta - (n+1)!\,a_{2,0}\sin\theta\bigr)\cos\theta + 12 a_{2,1}\sin^2\theta,\\
\Delta_3^{(n+1)}(\theta) & = a_{2,0}\,a_{n+1,1}\cos\theta_0 + (n+1)!\,b_2\sin\theta_0.
\end{align*}

A ridge point of a surface in $\R^3$ was first studied in details by Porteous \cite{Porteous1971} as a point where the distance squared function on the surface has an $A_{\geq 3}$-singularity.
It is also a point where one principal curvature has an extremum value along the corresponding line of curvature.
A point where one principal curvature has an extremum value along the other line of curvature is also important.
Such a point is called the sub-parabolic point, which was first studied in details by Bruce and Wilkinson \cite{BW1991} from the viewpoint of folding maps.
If a regular surface has a ridge point with respect to the line of curvature tangent to $\bm{v}_i$, then its focal surface corresponding to $\kappa_i$ has a singular point.
On the other hand, if a regular surface has a sub-parabolic point with respect to the line of curvature tangent to $\bm{v}_i$, then its focal surface corresponding to $\kappa_i$ has a parabolic point. 

We define the ridge and sub-parabolic points over the singularity of $S$ are as follows:
\begin{definition}
\label{prop:ridge}
\begin{enumerate}
\item
A point $(0,\theta_0)$ is a \textit{ridge point relative to $\tv{1}$ over the singularity of $S$} if $\Delta_1^{(n+1)}(\theta_0)=0$. 
Moreover, the \textit{ridge point $(0,\theta_0)$ is a first (resp. second or higher) order ridge point relative to $\tv{1}$ over the singularity of $S$} if $\Delta_2^{(n+1)}(\theta_0)\ne0$ (resp. $\Delta_2^{(n+1)}=0$). 

\item 
A point $(0,\theta_0)$ is a \textit{sub-parabolic point relative to $\tv{2}$ over the singularity of $S$} if $\Delta_3^{(n+1)}(\theta_0)=0$.
\end{enumerate}
\end{definition}

When $g$ is $\mathcal{A}$-equivalent to $S_1$, since $a_{2,1}\ne0$ (Proposition \ref{prop:CriteriaOfA}) we obtain $\wt{\bm{n}}$, $\tk{i\,}$ and $\tv{i\,}$ via $\wt{\Pi}_2$.
Similarly, when $g$ is $\mathcal{A}$-equivalent to one of $S_{\geq2}$, $B_k$, $C_k$ and $F_4$ singularities, we obtain $\wt{\bm{n}}$, $\tk{i\,}$, and $\tv{i\,}$ via $\wt{\Pi}_m$ as shown in Table \ref{tab:BlowUp}. 
Hence, we have the following lemma. 
\begin{table}[ht!]
\caption{Correspondence between the type of $\ma$-singularity and $\wt{\Pi}_n$.}
\centering
\begin{tabular}{ccccc}
\Bline
$\ma$-type & $S_k$ & $B_k$ & $C_k$ & $F_4$ \\\hline
$\wt{\Pi}_m$ & $\wt{\Pi}_{k+1}$ & $\wt{\Pi}_2$ & $\wt{\Pi}_k$ & $\wt{\Pi}_3$\\ 
\Bline
\end{tabular}
\label{tab:BlowUp}
\end{table}

\begin{lemma}
The necessary and sufficient conditions for a point $(0,\theta)$ to be a first ridge point relative to $\tv{1}$ and a sub-parabolic point relative to $\tv{2}$ over the singularity of $S$ are shown in Table \ref{tab:Ridge}.
\begin{table}[ht!]
\caption{Conditions for ridge and sub-parabolic points.}
\centering
\begin{tabular}{ccc}
\Bline
$\ma$-type & 1st ridge & sub-parabolic \\\hline
$S_k$ & $\Delta_1^{(k+1)}(0,\theta)=0$, $\Delta_2^{(k+1)}(0,\theta)\ne0$ & $\Delta_3^{(k+1)}(0,\theta)=0$\\
$B_k$ & $\Delta_1^{(2)}(0,\theta)=0$, $\Delta_2^{(2)}(0,\theta)\ne0$ & $\Delta_3^{(2)}(0,\theta)=0$\\
$C_k$ & $\Delta_1^{(k)}(0,\theta)=0$, $\Delta_2^{(k)}(0,\theta)\ne0$ & $\Delta_3^{(k)}(0,\theta)=0$\\
$F_4$ & $\Delta_1^{(3)}(0,\theta)=0$, $\Delta_2^{(3)}(0,\theta)\ne0$ & $\Delta_3^{(3)}(0,\theta)=0$\\
\Bline
\end{tabular}
\label{tab:Ridge}
\end{table}
\end{lemma}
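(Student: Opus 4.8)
The plan is to reduce the statement to a direct application of Definition \ref{prop:ridge} once the correct blow-up $\wt{\Pi}_{n+1}$ has been identified for each $\mathcal{A}$-type. Recall that Definition \ref{prop:ridge} characterizes a first-order ridge point relative to $\tv{1}$ by $\Delta_1^{(n+1)}(\theta_0)=0$ together with $\Delta_2^{(n+1)}(\theta_0)\ne0$, and a sub-parabolic point relative to $\tv{2}$ by $\Delta_3^{(n+1)}(\theta_0)=0$, where $n+1$ is the exponent of the blow-up used to extend the geometry across the singularity. The substantive computations---namely the leading-order expressions for $\tv{1}\tk{1}$, $\tv{1}^2\tk{1}$ and $\tv{2}\tk{1}$ displayed just before the definition---already guarantee that these $\Delta_i$-conditions faithfully encode the extremality of the principal curvature $\tk{1}$ along the two lines of curvature (using $a_{n+1,1}\ne0$ and $\cos\theta\ne0$ to clear the prefactors). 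Thus the remaining task is essentially bookkeeping: to match each singularity type with its value of $n$.

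First I would invoke the assumption \eqref{eq:assumption} together with Proposition \ref{prop:CriteriaOfA} to read off $n$ in each case. For $S_k$ the criteria force $a_{2,1}=\cdots=a_{k,1}=0$ and $a_{k+1,1}\ne0$ (with the convention $n=1$ when $a_{2,1}\ne0$, which subsumes $S_1$), so $n=k$ and the relevant blow-up is $\wt{\Pi}_{k+1}$. For $B_k$ the criteria give $a_{2,1}\ne0$ for every $k$, hence $n=1$ and $\wt{\Pi}_2$. For $C_k$ one has $a_{2,1}=\cdots=a_{k-1,1}=0$ and $a_{k,1}\ne0$, so $n=k-1$ and $\wt{\Pi}_k$. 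For $F_4$ the conditions $a_{2,1}=0$ and $a_{3,1}\ne0$ give $n=2$ and $\wt{\Pi}_3$. These are precisely the correspondences recorded in Table \ref{tab:BlowUp}.

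Next I would substitute the corresponding value of $n+1$ into Definition \ref{prop:ridge}. For $S_k$ this yields the conditions $\Delta_1^{(k+1)}(0,\theta)=0$, $\Delta_2^{(k+1)}(0,\theta)\ne0$ for a first ridge point and $\Delta_3^{(k+1)}(0,\theta)=0$ for a sub-parabolic point; for $B_k$ the superscript is $2$; for $C_k$ it is $k$; and for $F_4$ it is $3$. Reading these off line by line reproduces exactly Table \ref{tab:Ridge}, which completes the argument.

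I do not anticipate a genuine obstacle, since the geometric content is already secured by the directional-derivative formulas preceding the definition. The only points demanding care are the index matching---in particular that $\wt{\Pi}_{n+1}$ carries the exponent $n+1$ rather than $n$---and the treatment of the low-index cases $S_1$ and $B_2$, which are covered uniformly through the convention $n=1$ when $a_{2,1}\ne0$.
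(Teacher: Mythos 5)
Your proposal is correct and follows essentially the same route as the paper: the paper derives Table \ref{tab:Ridge} by noting (via Proposition \ref{prop:CriteriaOfA}) which blow-up $\wt{\Pi}_m$ from Table \ref{tab:BlowUp} applies to each $\mathcal{A}$-type and then reading off the conditions from Definition \ref{prop:ridge}, exactly as you do. Your explicit bookkeeping of the index $n$ in each case, including the convention $n=1$ when $a_{2,1}\ne0$ covering $S_1$ and $B_k$, matches the paper's (largely implicit) argument.
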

  
\section{Families of distance squared functions on singular surfaces.}

We do not recall here the definition of a versal unfolding.
Refer, for example, to \cite[Section 8 and 19]{Arnold1986} and \cite[Section 3]{Wall1981}.

We define a family of functions $D$ on a surface $S$ parameterized by a smooth map-germ $g:(\R^2,0)\to (\R^3,0)$ by 
\[
 D:(\R^2,0) \times (\R^3,\vp_0) \to\R,\quad
 D(u,v,x,y,z)
 = \dfrac12\|g(u,v) - \vp\|^2.
\]
The function $d_{\vp_0}(u,v) = D(u,v,\vp_0)$ is the \textit{distance squared function} on $S$ from a point $\vp_0 = (x_0,y_0,z_0)$.

\begin{theorem}
\label{thm:Distance}
Let $g:(\R^2,0)\to(\R^3,0)$ be given in the form \eqref{eq:normal_form_corank1}, and let $g$ be $\ma$-equivalent to one of $S_k$, $B_k$, $C_k$ and $F_4$ singularities. 
Suppose that $\vp_0 = (x_0,y_0,z_0)$ is on the normal plane, that is, $\vp_0 \in \R \wt{\bm{n}}(0,\theta_0)$, where $\wt{\bm{n}}$ is the well-defined unit normal vector obtained by using $\wt{\Pi}_m$ given by Table \ref{tab:BlowUp} and $\theta_0\in(-\pi/2,\pi/2]$.
\begin{enumerate}
\item 
Suppose that $\vp_0$ is not on the principal normal line, and that $(0,\theta_0)$ is not a parabolic point over the singularity. 
\begin{enumerate}
\item[\upshape{(1a)}]
$d_{\vp_0}$ has an $A_1$-singularity at $(0,0)$ if and only if $\vp_0$ is not on the focal locus.  
When $d_{\vp_0}$ has an $A_1$-singularity at $(0,0)$, $D$ is an $\mathcal{R}^+$ and $\mathcal{K}$-versal unfolding of $d_{\vp_0}$. 
\item[\upshape{(1b)}]
$d_{\vp_0}$ has an $A_2$-singularity at $(0,0)$ if and only if $p_0$ is on the focal locus and $(0,\theta_0)$ is not a ridge point relative to $\tv{1}$ over the singularity.
When $d_{\vp_0}$ has an $A_2$-singularity at $(0,0)$, $D$ is an $\mathcal{R}^+$ and $\mathcal{K}$-versal unfolding of $d_{p_0}$. 
\item[\upshape{(1c)}]
$d_{\vp_0}$ has an $A_3$-singularity at $(0,0)$ if and only if $\vp_0$ is on the focal locus and $(0,\theta_0)$ is a first order ridge point relative to $\tv{1}$ over the singularity. 
When $d_{\vp_0}$ has an $A_3$-singularity at $(0,0)$, $D$ is an $\mathcal{R}^+$-versal unfolding of $d_{\vp_0}$, and $D$ is an $\mathcal{K}$-versal unfolding of $d_{\vp_0}$ if and only if $(0,\theta_0)$ is not a sub-parabolic point relative to $\tv{2}$ over the singularity. 
\item[\upshape{(1d)}]
$d_{\vp_0}$ has an $A_{\geq 4}$-singularity at $(0,0)$ if and only if $\vp_0$ is on the focal locus and $(0,\theta_0)$ is a second or higher order ridge point relative to $\tv{1}$ over the singularity.
When $d_{\vp_0}$ has an $A_{\geq5}$ or $A_{\geq4}$-singularity at $(0,0)$, $D$ is not an $\mathcal{R}^+$- or $\mathcal{K}$-versal unfolding of $d_{\vp_0}$, respectively. 
If $d_{\vp_0}$ has an $A_4$-singularity at $(0,0)$, then $D$ is an $\mathcal{R}^+$-versal unfolding of $d_{p_0}$ if and only if there exists $(0,\theta)\in(-\pi/2,\pi/2]$ such that $(0,\theta)$ is not a ridge point relative to $\tv{1}$. 
\end{enumerate}
\item 
Suppose that $\vp_0$ is on the principal normal line. 
Then $D$ is neither an $\mathcal{R}^+$- nor $\mathcal{K}$-versal unfolding of $d_{\vp_0}$.
\begin{enumerate}
\item[\upshape{(2a)}]
$d_{\vp_0}$ has an $A_{\geq 2}$-singularity at $(0,0)$ if and only if $\vp_0$ is not the intersection point of the focal locus.
\item[\upshape{(2b)}]
$d_{\vp_0}$ has a singularity of type $D_4$ or more degenerate at $(0,0)$ if and only if $\vp_0$ is the intersection point of the focal locus.
\end{enumerate}
\end{enumerate}
\end{theorem}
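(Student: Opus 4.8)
The plan is to read off the singularity type of $d_{\vp_0}$ from its jet at the origin and to settle versality by the standard infinitesimal criterion: writing $J=\langle\partial_u d_{\vp_0},\partial_v d_{\vp_0}\rangle$, the unfolding $D$ is $\mathcal R^+$-versal iff $\mathcal E_2=J+\R\{1,\dot D_x,\dot D_y,\dot D_z\}$ and $\mathcal K$-versal iff $\mathcal E_2=\langle d_{\vp_0}\rangle+J+\R\{\dot D_x,\dot D_y,\dot D_z\}$, where the initial velocities are $\dot D_x=-u$, $\dot D_y=-(y-y_0)$, $\dot D_z=-(z-z_0)$. First I would record that $(0,0)$ is a critical point of $d_{\vp_0}$ exactly when $\vp_0$ lies in the normal plane (the hypothesis), and that the Hessian there is $\mathrm{diag}(1-b_2y_0-a_{2,0}z_0,\,-y_0)$. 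Using $\vp_0=t\,\wt{\bm n}(0,\theta_0)$ together with $y_0=tn_{20}$, $z_0=tn_{30}$ and the identity $b_2n_{20}+a_{2,0}n_{30}=L_0=k_{10}$, the two diagonal entries become $1-t\,k_{10}(\theta_0)$ and $-t\,n_{20}(\theta_0)$. The dichotomy of the theorem is then visible: $\vp_0$ is on the principal normal line iff $\cos\theta_0=0$ iff $n_{20}(\theta_0)=0$ (case~(2)), and otherwise $y_0\ne0$ (case~(1)), the focal locus being precisely the degeneracy locus $1-t\,k_{10}(\theta_0)=0$, i.e.\ the centres $t=1/k_{10}(\theta)$.

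In case~(1) the $v$-direction is Morse, so I would apply the splitting lemma to reduce $d_{\vp_0}$ to a one-variable germ $\hat d(u)$; the substitution $v\mapsto v_\ast(u)=O(u^{n+1})$ leaves the cubic coefficient untouched and alters the quartic only through a term carrying $a_{2,1}$ (which is nonzero only when $n=1$, consistently with the $12a_{2,1}\sin^2\theta$ term of $\Delta_2^{(n+1)}$). A direct expansion gives $[\hat d]_3\propto b_3y_0+a_{3,0}z_0$; substituting the focal relation $t=1/k_{10}$ turns this into $b_3y_0+a_{3,0}z_0=-\tfrac{t}{\ma(\theta_0)}\Delta_1^{(n+1)}(\theta_0)$, and the analogous computation of $[\hat d]_4$ (using $\Delta_1^{(n+1)}(\theta_0)=0$) produces a nonzero multiple of $\Delta_2^{(n+1)}(\theta_0)$. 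Hence the Hessian degenerates iff $\vp_0$ is on the focal locus ($A_{\ge2}$); then $A_{\ge3}$ iff $\Delta_1^{(n+1)}(\theta_0)=0$ (ridge) and $A_{\ge4}$ iff additionally $\Delta_2^{(n+1)}(\theta_0)=0$ (second order ridge). This is the contact dictionary already encoded in the computations of $\tv{1}\tk{1}$ and $\tv{1}^2\tk{1}$ preceding Definition~\ref{prop:ridge}, and it yields the singularity-type half of (1a)--(1d).

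For the versality in case~(1) I would compute the classes of $1,\dot D_x,\dot D_y,\dot D_z$ in $\mathcal E_2/J\cong\R\{1,u,\dots,u^{\mu-1}\}$: modulo $J$ one has $\dot D_x\equiv-u$, $\dot D_y\equiv y_0-\tfrac{b_2}{2}u^2-\tfrac{b_3}{6}u^3-\cdots$, $\dot D_z\equiv z_0-\tfrac{a_{2,0}}{2}u^2-\tfrac{a_{3,0}}{6}u^3-\cdots$, the decisive feature being that neither $\dot D_y$ nor $\dot D_z$ has a linear term. The non-parabolicity hypothesis gives $k_{10}(\theta_0)\ne0$, hence $(b_2,a_{2,0})\ne(0,0)$. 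Counting dimensions ($\mathcal R^+$ needs the velocities to fill $\mathcal E_2/(J+\R1)$, of dimension $\mu-1$; $\mathcal K$ needs them to fill $\mathcal E_2/\langle d_{\vp_0}\rangle+J$, of dimension $\mu$) immediately gives (1a), (1b), that $A_3$ is always $\mathcal R^+$-versal, that $A_{\ge4}$ is never $\mathcal K$-versal, and that $A_{\ge5}$ is never $\mathcal R^+$-versal. The two remaining assertions reduce to determinants: $\mathcal K$-versality at $A_3$ is governed by the determinant of the $\{1,u,u^2\}$-coordinates of $\dot D_x,\dot D_y,\dot D_z$, equal to $\tfrac12(b_2z_0-a_{2,0}y_0)=\tfrac{t}{2\ma(\theta_0)}\Delta_3^{(n+1)}(\theta_0)$, so it holds iff $(0,\theta_0)$ is not sub-parabolic; and $\mathcal R^+$-versality at $A_4$ is governed by $b_2a_{3,0}-b_3a_{2,0}$, which, writing $(b_3,a_{3,0})$ as a multiple of $((n+1)!\sin\theta_0,\,a_{n+1,1}\cos\theta_0)$ via $\Delta_1^{(n+1)}(\theta_0)=0$ and invoking $k_{10}(\theta_0)\ne0$, equals a nonzero multiple of $k_{10}(\theta_0)\ma(\theta_0)$ precisely when $(b_3,a_{3,0})\ne(0,0)$, i.e.\ iff $\Delta_1^{(n+1)}\not\equiv0$, i.e.\ iff some $(0,\theta)$ is not a ridge point.

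Finally, in case~(2) the degenerate direction is $v$, and the key observation is that the linear parts of $\dot D_x,\dot D_y,\dot D_z$ are $-u,0,0$ while $J$ already contains $u$ (the linear part of $\partial_u d_{\vp_0}$ is $(1-a_{2,0}z_0)u$); since $d_{\vp_0}$ is $A_{\ge2}$, the class of $v$ never lies in $J+\R\{1,\dot D_x,\dot D_y,\dot D_z\}$, giving non-versality at once. The corank of the Hessian separates the subcases: $1-a_{2,0}z_0\ne0$ gives corank $1$ and an $A_{\ge2}$ singularity (2a), while $1-a_{2,0}z_0=0$ gives corank $2$ and hence a $D_4$ or more degenerate singularity (2b); as $\theta_0\to\pi/2$ the focal relation degenerates to $z_0=1/a_{2,0}$, so $1-a_{2,0}z_0=0$ says exactly that $\vp_0$ is the point where the focal locus meets the principal normal line, its self-intersection. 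I expect the main obstacle to be the higher-order jet bookkeeping of case~(1): carrying the splitting-lemma reduction far enough to extract $[\hat d]_4$ and identifying it with $\Delta_2^{(n+1)}$ (where the $a_{2,1}$ contribution enters only through $v_\ast(u)$), while keeping the blow-up parameter $\theta_0$ consistent throughout. These matchings rest on the closed formulas of Appendix~A, and verifying them is the computational heart of the argument.
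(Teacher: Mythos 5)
Your proposal is correct and follows essentially the same route as the paper: the paper first proves a coefficient-level criterion (Proposition 3.2) by the same reduction (killing the $u^2v$ cross term, reading off the cubic coefficient $b_3y_0+a_{3,0}z_0$ and the quartic coefficient with its $a_{2,1}^2$ correction, and checking the infinitesimal versality criterion with the velocities $-u$, $-(y-y_0)$, $-(z-z_0)$), and then translates to the ridge/sub-parabolic language exactly as you do, via $\vp_0=\lambda\wt{\bm n}(0,\theta_0)$, $\lambda=1/\tk{1}(0,\theta_0)$, and the identities expressing $b_3y_0+a_{3,0}z_0$, the quartic term, $b_2z_0-a_{2,0}y_0$ and $a_{2,0}b_3-a_{3,0}b_2$ as multiples of $\Delta_1^{(n+1)}$, $\Delta_2^{(n+1)}$, $\Delta_3^{(n+1)}$ and $k_{10}\ma(\theta_0)$ respectively. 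The only differences are presentational (splitting lemma versus an explicit substitution, and working in $\mathcal{E}_2/J$ versus the paper's full coefficient tables), and your sign for the $A_3$ $\mathcal{K}$-versality determinant is in fact the consistent one.
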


To show Theorem \ref{thm:Distance}, we first show criterion for singularities of distance-squared functions in terms of the coefficients in \eqref{eq:normal_form_corank1}.

\begin{proposition}
\label{prop:distance}
Let $g:(\R^2,0)\to(\R^3,0)$ be given in the form\eqref{eq:normal_form_corank1}. 
Then $d_{\vp_0}$ on $g$ is singular at $(0,0)$ if and only if $x_0=0$, that is, $\vp_0$ is on the normal plane.
Moreover, assume that $d_{\vp_0}$ is singular at $(0,0)$.
Then
\begin{enumerate}
\item 
$d_{\vp_0}$ has an $A_1$-singularity at $(0,0)$ if and only if $y_0(b_2y_0+a_{2,0}z_0-1)\ne0$.
When this is the case, $D$ is an $\mathcal{R}^+$- and $\mathcal{K}$-versal unfolding of $d_{\vp_0}$.
\item 
$d_{\vp_0}$ has an $A_2$-singularity at $(0,0)$ if and only if one of the following conditions holds:
\begin{align*}
(\mathrm{2a})& \quad y_0\ne0, \quad b_2 y_0 + a_{2,0} z_0 - 1 = 0,\quad b_3 y_0 + a_{3,0} z_0 \ne0;\\
(\mathrm{2b})& \quad y_0 = 0,\quad a_{2,0} z_0 \ne 1,\quad a_{0,3}z_0\ne0.
\end{align*}
If condition \textup{(2a)} holds, then $D$ is an $\mathcal{R}^+$- and $\mathcal{K}$-versal unfolding of $d_{\vp_0}$.
On the other hand, if condition \textup{(2b)} holds, then $D$ is neither an $\mathcal{R}^+$- nor $\mathcal{K}$-versal unfolding of $d_{\vp_0}$.
\item 
$d_{p_0}$ has an $A_3$-singularity at $(0,0)$ if and only if one of the following conditions holds:
\begin{align*}
(\mathrm{3a})& \quad y_0 \ne0,\quad b_2 y_0 + a_{2,0} z_0 - 1 = b_3 y_0 + a_{3,0} z_0 = 0,\\
&\quad b_4 y_0^2 + a_{4,0} y_0 z_0 - 3 a_{2,1}^2 z_0^2 - 3(a_{2,0}^2 + b_2^2) y_0 \ne0;\\
(\mathrm{3b})&\quad y_0 = 0,\quad a_{2,0} z_0 \ne 1,\quad a_{0,3} z_0 = 0,\quad (a_{0,4}\,a_{2,0} - 3a_{1,2}^2)z_0^2 - (a_{0,4} + 3 a_{2,0}) z_0 + 3 \ne0.
\end{align*}
If condition \textup{(3a)} holds, then $D$ is an $\mathcal{R}^+$-versal unfolding of $d_{\vp_0}$, and $\mathcal{K}$-versal unfolding of $d_{\vp_0}$ if and only if $a_{2,0} y_0 - b_2 z_0 \ne0$. 
On the other hand, if condition \textup{(3b)} holds, then $D$ is neither an $\mathcal{R}^+$- nor $\mathcal{K}$-versal unfolding of  $d_{\vp_0}$. 
\item 
$d_{\vp_0}$ has an $A_{\geq 4}$-singularity at $(0,0)$ if and only if one of the following conditions holds:
\begin{align*}
(\mathrm{4a})& \quad y_0 \ne0,\ b_2 y_0 + a_{2,0} z_0 - 1 = 0,\
b_3 y_0 + a_{3,0} z_0 = 0,\\
&\quad b_4 y_0^2 + a_{4,0} y_0 z_0 - 3 a_{2,1}^2 z_0^2 - 3(a_{2,0}^2 + b_2^2) y_0 = 0;\\
(\mathrm{4b})& \quad y_0 = 0,\ a_{2,0} z_0 \ne 1,\ z_0\ne 0,\ a_{0,3} = 0,\ (a_{0,4}\,a_{2,0} - 3 a_{1,2}^2)z_0^2 - (a_{0,4} + 3 a_{2,0}) z_0 + 3 =0.
\end{align*}
When $d_{\vp_0}$ has an $A_{\geq5}$- or $A_{\geq4}$-singularity at $(0,0)$, $D$ is not an $\mathcal{R}$- or $\mathcal{K}$-versal unfolding of $d_{\vp_0}$, respectively. 
If condition \textup{(4a)} holds, then $D$ is an
$\mathcal{R}^+$-versal unfolding of $d_{\vp_0}$ having an $A_4$-singularity at $(0,0)$ if and only if $(a_{3,0},b_3)\ne(0,0)$. 
On the other hand, if condition \textup{(4b)} holds, then $D$ is not an $\mathcal{R}^+$-versal unfolding of $d_{\vp_0}$. 
\item 
$d_{\vp_0}$ has a $D_4$ or more degenerate singularity at $(0,0)$ if and only if $y_0=0$ and $a_{2,0} z_0 = 1$.
When this is the case, $D$ is neither $\mathcal{R}^+$- nor $\mathcal{K}$-versal unfolding of $d_{\vp_0}$.
\end{enumerate} 
\end{proposition}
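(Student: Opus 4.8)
The plan is to extract the low-order Taylor data of $d_{\vp_0}$ directly from the normal form \eqref{eq:normal_form_corank1}, read off each singularity type from it, and then settle the versality assertions by the infinitesimal versality criterion. First I would compute the gradient: writing $g=(g_1,g_2,g_3)$ and using \eqref{eq:FirstDerivatives} one gets $\partial_u d_{\vp_0}(0,0)=-x_0$ and $\partial_v d_{\vp_0}(0,0)=0$, so $d_{\vp_0}$ is singular at the origin exactly when $x_0=0$; assume this henceforth. Using \eqref{eq:SecondDerivatives} the Hessian at the origin is diagonal,
\[
\hess d_{\vp_0}(0,0)=\begin{pmatrix}1-b_2y_0-a_{2,0}z_0 & 0\\ 0 & -y_0\end{pmatrix},
\]
with determinant $y_0(b_2y_0+a_{2,0}z_0-1)$. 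Hence $d_{\vp_0}$ has an $A_1$-singularity iff this product is nonzero, giving assertion (1). The Hessian degenerates in exactly three ways, which organize the rest of the proof: (a) $y_0\neq0$ and $b_2y_0+a_{2,0}z_0-1=0$ (kernel in the $u$-direction); (b) $y_0=0$ and $a_{2,0}z_0\neq1$ (kernel in the $v$-direction); and (c) $y_0=0,\ a_{2,0}z_0=1$ (Hessian identically zero, i.e. corank two).

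For the $A_{\geq2}$ classification I would apply the splitting lemma. In branch (a) the non-degenerate direction is $v$, so by the implicit function theorem (valid since $\partial_v^2 d_{\vp_0}(0,0)=-y_0\neq0$) I solve $\partial_v d_{\vp_0}=0$ for $v=v(u)$ with $v(0)=0$ and set $\widetilde d(u)=d_{\vp_0}(u,v(u))$; the right-equivalence class of $d_{\vp_0}$ equals that of $\widetilde d$ up to a non-degenerate quadratic in the remaining variable, and $d_{\vp_0}$ has type $A_m$ iff $\widetilde d^{(m+1)}(0)$ is the first nonvanishing derivative. Expanding, the cubic coefficient of $\widetilde d$ is proportional to $b_3y_0+a_{3,0}z_0$ (this is condition (2a)), and, when that vanishes, the quartic coefficient is proportional to $b_4y_0^2+a_{4,0}y_0z_0-3a_{2,1}^2z_0^2-3(a_{2,0}^2+b_2^2)y_0$ (conditions (3a) and (4a)); note $v(u)=O(u^2)$, and the term $-3a_{2,1}^2z_0^2$ is produced precisely by feeding this substitution back into the quartic. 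Branch (b) is symmetric with $u$ and $v$ exchanged: one solves $\partial_u d_{\vp_0}=0$ for $u=u(v)$ (valid since $\partial_u^2d_{\vp_0}(0,0)=1-a_{2,0}z_0\neq0$) and the successive coefficients of the reduced function yield $a_{0,3}z_0$ and $(a_{0,4}a_{2,0}-3a_{1,2}^2)z_0^2-(a_{0,4}+3a_{2,0})z_0+3$ of (2b),(3b),(4b). In branch (c) the whole $2$-jet vanishes, so I examine the cubic $3$-jet; since the assertion only claims ``$D_4$ or more degenerate'' no finer analysis is required, giving (5).

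For the versality statements I would use the initial speeds of the unfolding: from $D=\tfrac12\|g-\vp\|^2$ we read $\partial D/\partial x|_{\vp_0}=-u$, $\partial D/\partial y|_{\vp_0}=-(g_2-y_0)$ and $\partial D/\partial z|_{\vp_0}=-(g_3-z_0)$. The infinitesimal criteria are
\[
\mathcal{E}_2=\langle\partial_u d_{\vp_0},\partial_v d_{\vp_0}\rangle+\R\{1,\dot D_x,\dot D_y,\dot D_z\}
\]
for $\mathcal{R}^+$-versality and $\mathcal{E}_2=\langle d_{\vp_0}\rangle+\langle\partial_u d_{\vp_0},\partial_v d_{\vp_0}\rangle+\R\{\dot D_x,\dot D_y,\dot D_z\}$ for $\mathcal{K}$-versality. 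After reducing to the bad variable as above, these become finite rank conditions on the projected speeds in the one-variable quotient (spanned by $1,t,\dots,t^{m-1}$ at an $A_m$-point). In branch (a) the constraint $b_2y_0+a_{2,0}z_0=1$ forces $(b_2,a_{2,0})\neq(0,0)$, so at an $A_3$-point $\dot D_x=-u$ supplies the degree-one direction and $\dot D_y,\dot D_z$ the degree-two direction, making $D$ always $\mathcal{R}^+$-versal, whereas $\mathcal{K}$-versality requires the further independence of the constant directions, whose $3\times3$ determinant is proportional to $a_{2,0}y_0-b_2z_0$; this explains (3a), and the analogous $A_4$-rank computation gives the criterion $(a_{3,0},b_3)\neq(0,0)$ in (4a). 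In branch (b) every speed is divisible by the bad variable $v$ to second order, so the linear-in-$v$ term is absent from all of them and $D$ is never $\mathcal{R}^+$- or $\mathcal{K}$-versal, matching (2b),(3b),(4b); in branch (c) the corank-two $D_4$-singularity has $\mathcal{K}_e$-codimension four, which three parameters cannot unfold, giving the non-versality in (5).

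The main obstacle will be the accurate bookkeeping in the degenerate branches: computing $v(u)$ (resp. $u(v)$) and the reduced function to fourth order so that the $A_3$-versus-$A_{\geq4}$ and $A_4$-versus-$A_{\geq5}$ thresholds come out in the precise polynomial combinations listed, and then organizing the versality step as a clean rank computation in the correct quotient so that the subtle $\mathcal{R}^+/\mathcal{K}$ split at $A_3$ and the $\mathcal{R}^+$ criterion $(a_{3,0},b_3)\neq(0,0)$ at $A_4$ emerge as the stated determinants rather than being lost in the expansion.
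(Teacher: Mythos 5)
Your overall strategy coincides with the paper's: compute the gradient and diagonal Hessian to get the singularity condition and the $A_1$ case, split into the three degeneration branches of the Hessian, reduce to one variable in the non-corank direction (the paper does this by the explicit substitution $v\mapsto v-a_{2,1}z_0u^2/(2y_0)$, resp.\ $u\mapsto u-a_{1,2}z_0(2(a_{2,0}z_0-1))^{-1}v^2$, which is exactly your Lyapunov--Schmidt reduction truncated to the order needed), and then verify the infinitesimal $\mathcal{R}^+$- and $\mathcal{K}$-versality criteria by a rank computation on the unfolding speeds modulo the gradient ideal. Your reading of branch (a) ($\mathcal{R}^+$-versality forced by $(a_{2,0},b_2)\ne(0,0)$, the $\mathcal{K}$-condition and the $A_4$-condition as $2\times2$/$3\times3$ minors) and of branch (b) (the monomial $v$ is never generated, so versality always fails) matches the paper's tables.

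The one genuine gap is in item (5). Your argument there is a pure dimension count: ``$D_4$ has $\mathcal{K}_e$-codimension four, which three parameters cannot unfold.'' That settles $\mathcal{K}$-non-versality (and $\mathcal{R}^+$-non-versality for $D_5$ or worse, where $\mu-1\geq 4$), but it does \emph{not} settle $\mathcal{R}^+$-non-versality at a point where $d_{\vp_0}$ has exactly a $D_4$-singularity: there $\mu-1=3$, which equals the number of unfolding parameters, so a $3$-parameter unfolding could a priori be $\mathcal{R}^+$-versal. The paper closes this by an explicit check of the criterion \eqref{eq:DistanceR5}: with $y_0=0$ and $a_{2,0}z_0=1$ the gradient ideal starts in degree two and the speeds $D_x=-u$, $D_y$, $D_z$ contribute only $1$, $u$ and higher-order terms, so the monomial $v$ is again missing and $\mathcal{R}^+$-versality fails. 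You need to add this computation (it is the same ``missing linear monomial'' argument you already use in branch (b)); as written, your proof of (5) only proves half of the claim.
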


\begin{proof}
Since $\partial d_{\vp_0}/\partial u(0,0) = -x_0$ and $\partial d_{\vp_0}/\partial v(0,0) = 0$, the function $d_{\vp_0}$ is singular at $(0,0)$ if and only if $x_0 = 0$.

We assume that $d_{\vp_0}$ is singular at $(0,0)$.
Then
\[
j^2 d_{\vp_0}(0) = \dfrac12\bigl(\|\vp_0\|^2 - ((b_2 y_0 + a_{2,0} z_0 - 1)u^2 + y_0 v^2)\bigr),
\]
and thus $d_{\vp_0}$ has an $A_1$-singularity at $(0,0)$ if and only if $y_0(b_2 y_0 + a_{2,0} z_0 -1) \ne 0$.
Moreover, the singularity of $d_{\vp_0}$ at $(0,0)$ is of type $A_{\geq 2}$ if and only if 
(i) $y_0\ne0$ and $b_2 y_0 + a_{2,0} z_0 -1 = 0$ or
(ii) $y_0 = 0$ and $a_{2,0} z_0 \ne 1$ hold, and that is of type $D_{\geq 4}$ or more degenerate if and only if $y_0 = 0$ and $a_{2,0} z_0 = 1$.

We assume that the condition (i) holds.
Since $y_0 \ne 0$, by replacing $v$ by $v - a_{2,1} z_0 u^2/(2y_0)$, we can reduce 4-jet of $d_{\vp_0}$ to
\begin{align*}
j^4 d_{\vp_0}(0) & = \dfrac12\|\vp_0\|^2 - \dfrac12 y_0 v^2 -\dfrac16((b_3 y_0 + a_{3,0} z_0) u^3 + 3 a_{1,2} z_0 u v^2 + a_{0,3}z_0 v^3)\\
&\quad - \dfrac1{24}\left(\dfrac{b_4 y_0^2 +a_{4,0} y_0 z_0 - 3a_{2,1}^2 z_0^2 -3 y_0 (a_2^2 + b_2^2)}{y_0} + 4 c_{3,1} u^3 v + 6 c_{2,2} u^2 v^2 + 4 c_{1,3} u v^3 + a_{0,4} v^4\right),
\end{align*}
where $c_{3,1}, c_{2,2}, c_{1,3}, c_{0,4} \in \R$.
From this we see that the assertions (2a), (3a) and (4a) hold.

We turn to the case (ii) and assume that condition (ii) holds.
Since $a_{2,0} z_0 \ne 1$, by replacing $u$ by $u - a_{1,2}z_0 (2(a_{2,0}z_0 - 1))^{-1}v^2$, we can reduce 4-jet of $d_{\vp_0}$ to
\begin{align*}
j^4d_{\vp_0}(0) & = \dfrac12 z_0^2 - \dfrac12(a_{2,0}z_0 - 1)u^2 -\dfrac16 z_0(a_{3,0}u^3 + 3 a_{2,1}u^2 v + a_{0,3}v^3) \\
& \quad - \dfrac1{24}\left(\hat c_{4,0}u^4 + 4\hat c_{3,1}u^3 v + 6\hat c_{2,2}u^2 v^2 + 4\hat c_{1,3}u v^3 + \dfrac{(a_{0,4}\,a_{2,0} - 3a_{1,2}^2)z_0^2 - (a_{0,3} + 3a_{2,0})z_0 + 3}{a_{2,0}z_0 - 1}\right),
\end{align*}
where $\hat c_{4,0}, \hat c_{3,1}, \hat c_{2,2}, \hat c_{1,3} \in \R$. 
From this we see that the assertions (2b), (3b) and (4b) hold.

Let us prove the necessary and sufficient conditions for $D$ being an $\mathcal{R}^+$- and $\mathcal{K}$-versal unfolding of $d_{\vp_0}$. 
We skip the proofs of the assertion (1) and (2), because the proofs of (1) and (2) is similar to that of (3).
First, we consider the condition (3a). 
Assume that (3a) holds.
Since $A_3$-singularity is $4$-determined, to see that $D$ is an $\mathcal{R}^+$- or $\mathcal{K}$-versal unfolding of $d_{\vp_0}$ we need to verify the equalities
\begin{align}
\label{eq:DistanceR3a}
\mathcal{E}_2 &= \left\langle\pd{d_{\vp_0}}{u}, \pd{d_{\vp_0}}{v}
\right\rangle_{\mathcal{E}_2} + \left\langle\left.\pd{D}{x}\right|_{\R^2 \times\{\vp_0\}}, \left.\pd{D}{y}\right|_{\R^2 \times\{\vp_0\}}, \left.\pd{D}{z}\right|_{\R^2 \times\{\vp_0\}}\right\rangle_{\R} + \langle 1 \rangle_{\R} + \langle u, v \rangle_{\mathcal{E}_2}^5, \quad \mbox{or}\\
\label{eq:DistanceK3a}
\mathcal{E}_2 &= \left\langle\pd{d_{\vp_0}}{u}, \pd{d_{\vp_0}}{v}, d_{\vp_0} \right\rangle_{\mathcal{E}_2} + \left\langle\left.\pd{D}{x}\right|_{\R^2 \times\{\vp_0\}}, \left.\pd{D}{y}\right|_{\R^2 \times\{\vp_0\}}, \left.\pd{D}{z}\right|_{\R^2 \times\{\vp_0\}}\right\rangle_{\R} + \langle u, v \rangle_{\mathcal{E}_2}^5,
\end{align}
respectively (cf. \cite[p.149]{Mart1982}).
Replacing $v$ by $v - a_{2,1} z_0 u^2/(2 y_0)$, we show that the coefficients of $u^i v^j$ of functions appearing in
 \eqref{eq:DistanceR3a} and \eqref{eq:DistanceK3a} are given by the following tables:
\[
\mbox{\footnotesize{$
\begin{array}{c|c|cc|ccc|cccc|c}
& 1 & u & v & u^2 & u v & v^2 & u^3 & u^2 v & u v^2 & v^3 & u^4 \\\hline
D_x & 0 & \fbox{$-1$} & 0 & 0 & 0 & 0 & 0 & 0 & 0 & 0 & 0\\
D_y & y_0 & 0 & 0 & -\frac{b_2}2 & 0 & -\frac12 & \frac{\alpha_{3,0}}6 & \frac{\alpha_{2,1}}2 & \frac{\alpha_{1,2}}2 & \frac{\alpha_{0,3}}6 & \frac{\alpha_{4,0}}24\\
D_z & z_0 & 0 & 0 & -\frac{a_{2,0}}2 & 0 & 0 & \frac{\beta_{3,0}}6 & \frac{\beta_{2,1}}2 & \frac{\beta_{1,2}}2 & \frac{\beta_{0,3}}6 & \frac{\beta_{4,0}}24\\\hline
(d_{\vp_0})_u & 0 & 0 & 0 & 0 & 0 & -\frac{a_{1,2}z_0}2 & \fbox{$\frac{c_{4,0}}6$} & \frac{c_{3,1}}2 & \frac{c_{2,2}}2 & \frac{c_{1,3}}6 & \frac{c_{5,0}}24 \\
(d_{\vp_0})_v & 0 & 0 & \fbox{$-y_0$} & 0 & -a_{1,2}z_0 & -\frac{a_{0,3}z_0}2 & \frac{c_{3,1}}6 & \frac{c_{2,2}}2 & \frac{c_{1,3}}2 & \frac{c_{0,4}}6 & \frac{c_{4,1}}24 \\\hline
u(d_{\vp_0})_u & 0 & 0 & 0 & 0 & 0 & 0 & 0 & 0 & -\frac{a_{1,2}z_0}2 & 0 & \fbox{$\frac{c_{4,0}}6$} \\
u(d_{\vp_0})_v & 0 & 0 & 0 & 0 & \fbox{$-y_0$} & 0 & 0 & -a_{1,2}z_0 & -\frac{a_{0,3}z_0}2 & 0 & \frac{c_{3,1}}6 \\
v(d_{\vp_0})_v & 0 & 0 & 0 & 0 & 0 & \fbox{$-y_0$} & 0 & 0 & -a_{1,2}z_0 & -\frac{a_{0,3}z_0}2 & 0\\\hline
u^2(d_{\vp_0})_v & 0 & 0 & 0 & 0 & 0 & 0 & 0 & \fbox{$-y_0$} & 0 & 0 & 0\\
u v(d_{\vp_0})_v & 0 & 0 & 0 & 0 & 0 & 0 & 0 & 0 & \fbox{$-y_0$} & 0 & 0\\
v^2(d_{\vp_0})_v & 0 & 0 & 0 & 0 & 0 & 0 & 0 & 0 & 0 & \fbox{$-y_0$} & 0\\
\end{array}$}}
\]
\[
\mbox{\small{$
\begin{array}{c|c|ccccc}
& u^i v^j\ (i + j \leq 3) & u^4 & u^3 v & u^2 v^2 & u v^3 & v^4 \\\hline
u^3 (d_{\vp_0})_v & 0 & 0 & \fbox{$-y_0$} & 0 & 0 & 0 \\
u^2 v (d_{\vp_0})_v & 0 & 0 & 0 & \fbox{$-y_0$} & 0 & 0 \\
u v^2 (d_{\vp_0})_v & 0 & 0 & 0 & 0 & \fbox{$-y_0$} & 0 \\
v^3 (d_{\vp_0})_v & 0 & 0 & 0 & 0 & 0 & \fbox{$-y_0$}\\\hline
\end{array}$}}
\]
Here
\[
c_{i,j} = \dfrac{\partial^{i + j} d_{\vp_0}}{\partial u^i \partial v^j}(0,0),\quad \alpha_{i,j} = \dfrac{\partial^{i + j + 1} D}{\partial u^i \partial v^j \partial y}(0,0,\vp_0)\quad\mbox{and}\quad
 \beta_{i,j} = \dfrac{\partial^{i + j + 1} D}{\partial u^i \partial v^j \partial z}(0,0,\vp_0).
\]
We note that $c_{4,0} = -(b_4 y_0^2 +a_{4,0} y_0 z_0 - 3_{2,1}^2 z_0^2 -3 y_0 (a_2^2 + b_2^2))/y_0\ne0$.
Since boxed entries are non-zero, the matrix represented by the above tables is of full rank, that is, the equality \eqref{eq:DistanceR3a} (resp. \eqref{eq:DistanceK3a}) holds if and only if $(a_{2,0},b_2) \ne (0,0)$ (resp. $b_2 y_0 + a_{2,0} z_0 \ne0$).
However, since now $b_2 y_0 + a_{2,0} z_0 - 1 = 0$ holds, we have $(a_{2,0},b_2) \ne (0,0)$.
Therefore, if (3a) holds, then $D$ is an $\mathcal{R}^+$-versal unfolding of $d_{p_0}$. 
 
Next, we assume that (3b) holds. 
Similar to (3a), we need to verify \eqref{eq:DistanceR3a} or \eqref{eq:DistanceK3a} holds.
Since 
\begin{align*}
\pd{D}{x}(u,v,\vp_0) & = -u,\quad \pd{D}{y}(u,v,\vp_0) = -\dfrac12(b_2^2 u^2 + v^2) + O(u,v)^3,\\
\pd{D}{z}(u,v,\vp_0) & = z_0 - \dfrac12a_{2,0} u^2 + O(u,v)^3,\\
\pd{d_{\vp_0}}{u}(u,v) & = (1 - a_{2,0}z_0) u - \dfrac12(2a_{2,1}z_0 u v + a_{1,2} v^2) + O(u,v)^3,\quad \\
\pd{d_{\vp_0}}{v}(u,v) & = -\dfrac12(a_{2,1}z_0 u^2 + 2 a_{1,2}z_0 u v + a_{0,3} z_0 v^2) + O(u,v)^3, 
\end{align*}
neither \eqref{eq:DistanceR3a} nor \eqref{eq:DistanceK3a} does not hold. 

Now we turn to prove (4). 
The number of parameters in an $\mathcal{R}^+$-mini-versal unfolding of $A_5$-singularity is 4.
Therefore, $D$ is not an $\mathcal{R}^+$-versal unfolding of $d_{\vp_0}$ having $A_{\geq 5}$-singularity because it is a 3-parameter unfolding. 
For the similar reason, $D$ is not an $\mathcal{K}$-versal unfolding of $d_{\vp_0}$ having $A_{\geq4}$-singularity.

We assume that (4a) holds and $d_{\vp_0}$ has an $A_4$-singularity at $(0,0)$. 
Since $A_4$-singularity is 5-determined, to see that $D$ is an $\mathcal{R}^+$-versal unfolding of $d_{\vp_0}$ we need to verify the equality
\begin{align}
\label{eq:DistanceR4a}
\mathcal{E}_2 &= \left\langle\pd{d_{\vp_0}}{u}, \pd{d_{\vp_0}}{v}\right\rangle_{\mathcal{E}_2} + \left\langle\left.\pd{D}{x}\right|_{\R^2 \times\{\vp_0\}}, \left.\pd{D}{y}\right|_{\R^2 \times\{\vp_0\}}, \left.\pd{D}{z}\right|_{\R^2 \times\{\vp_0\}}\right\rangle_{\R} + \langle 1 \rangle_{\R} + \langle u, v \rangle_{\mathcal{E}_2}^6.
\end{align}
We consider the table in the proof of (3a).
Since, $d_{\vp_0}$ has an $A_4$-singularity at $(0,0)$, we have $c_{4,0} = 0$ and $c_{5,0} \ne0$. 
Hence, \eqref{eq:DistanceR4a} holds if and only if
\[
\begin{vmatrix}
-\dfrac{b_2}2 & \dfrac{\alpha_{3,0}}6 \\
-\dfrac{a_{2,0}}2 & \dfrac{\beta_{3,0}}6
\end{vmatrix}
= -\dfrac1{12}(a_{2,0} b_3 - a_{3,0} b_2) \ne0.
\]
Let denote $L_1$ and $L_2$, respectively, lines $b_2 y + a_{2,0} z -1 = 0$ and $b_3 y + a_{3,0} z = 0$ on the $yz$-plane. 
We remark that now $(a_{2,0}, b_2)\ne(0,0)$ holds by the same reason as in (3a). 
The condition that $b_2 y_0 + a_{2,0} z_0 -1 = b_3 y_0 + a_{3,0} z_0 = 0$ is equivalent to the condition that a point $(y_0,z_0)$, on the $yz$-plane, is the intersection of $L_1$ and $L_2$ or is on $L_1$ when $(a_{3,0},b_3)\ne(0,0)$ or $(a_{3,0},b_3)=(0,0)$, respectively. 
Therefore, if $(a_{3,0},b_3)\ne(0,0)$ (resp. $=(0,0)$) then $a_{2,0}b_3 - a_{3,0}b_2 \ne0$ (resp. $=0$), and vice versa. 
Remark that $a_{3,0} = b_3 = 0$ if and only if $d_{\vp_0}$ has $A_{\geq 3}$-singularity at $(0,0)$ for any $\vp_0\in L_1$.
 
If (4b) holds, then $D$ is not an $\mathcal{R}^+$-versal unfolding of $d_{\vp_0}$ having an $A_4$-singularity at $(0,0)$ by the same reason as in (3b). 

Now, we shall prove (5). 
Since the number of parameters in an $\mathcal{R}^+$- (resp. $\mathcal{K}$-) mini-versal unfolding of $D_5$ (resp. $D_4$) is 4, $D$ is not an $\mathcal{R}^+$- (resp. $\mathcal{K}$-) versal unfolding of $d_{\vp_0}$ having a $D_5$ (resp. $D_4$) or more degenerate singularity at $(0,0)$. 
Moreover, since $D_4$-singularity is 3-determined, $D$ is an $\mathcal{R}^+$-versal unfolding of $d_{\vp_0}$ having a $D_4$-singularity at $(0,0)$ if and only if 
\begin{align}
\label{eq:DistanceR5}
\mathcal{E}_2 &= \left\langle\pd{d_{\vp_0}}{u}, \pd{d_{\vp_0}}{v}\right\rangle_{\mathcal{E}_2} + \left\langle\left.\pd{D}{x}\right|_{\R^2 \times\{\vp_0\}}, \left.\pd{D}{y}\right|_{\R^2 \times\{\vp_0\}}, \left.\pd{D}{z}\right|_{\R^2 \times\{\vp_0\}}\right\rangle_{\R} + \langle 1 \rangle_{\R} + \langle u, v \rangle_{\mathcal{E}_2}^4.
\end{align}
holds.
If $d_{p_0}$ has a $D_4$ singularity at $(0,0)$, then 
\begin{align*}
\pd{D}{x}(u,v,\vp_0) & = -u,\quad \pd{D}{y}(u,v,\vp_0) = -\dfrac12(b_2^2 u^2 + v^2) + O(u,v)^3,\\
\pd{D}{z}(u,v,\vp_0) & = \dfrac1{a_{2,0}} - \dfrac12a_{2,0} u^2 + O(u,v)^3,\\
\pd{d_{\vp_0}}{u}(u,v) & = -\dfrac1{2a_{2,0}}(a_{3,0} u^2 + 2a_{2,1} u v + a_{1,2}v^2) + O(u,v)^3,\quad\\
\pd{d_{\vp_0}}{v}(u,v) & = -\dfrac1{2a_{2,0}}(a_{2,1} u^2 2a_{1,2} u v + a_{0,3} v^2) + O(u,v)^3,  
\end{align*}
and thus \eqref{eq:DistanceR5} does not hold.
\end{proof}

Let $g:(\R^2, 0) \to (\R^3, 0)$ be a smooth map-germ of corank 1 at the origin, and let $j^2g(0)$ be $\mathcal{A}$-equivalent to $(u,v^2,0)$.
From Proposition \ref{prop:distance}, the locus of points $\vp$ where $d_{\vp}$ has a degenerate singularity at $(0,0)$ consists of one or two lines on the normal plane. 
We call such a locus a \textit{focal locus}.
The focal locus contains the principal normal line.  
The focal locus can be considered as an analogy of the focal conic of Whitney umbrellas (cf. \cite[Lemma 3.3]{FH2012-1}).
From Proposition \ref{prop:distance} and definitions of inflection and degenerate inflection points, we can easily show that the following proposition holds: 
\begin{proposition}
The focal locus is, 
\begin{enumerate}
\item 
a pair of two intersecting lines if and only if the origin is not an inflection point,
\item 
a pair of two parallel lines if and only if the origin is a non-degenerate inflection point,
\item 
the principal normal line if and only if the origin is a degenerate inflection point.
\end{enumerate}
In case $(1)$, the distance squared function $d_{\vp}$ has a singularity of type $D_4$ or more degenerate at $(0,0)$ if and only if $\vp$ is the intersection point of these two lines. 
This is related to the umbilic curvature introduced in \cite{MN-B2015}. 
\end{proposition}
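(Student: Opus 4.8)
The plan is to read the focal locus directly off Proposition~\ref{prop:distance} and then translate the resulting pair of algebraic conditions into the three geometric cases using Remark~\ref{rem:Normal}. Since every point of the normal plane gives a singular $d_{\vp}$ at $(0,0)$, the focal locus is precisely the subset where this singularity fails to be of type $A_1$, i.e.\ where the $A_1$-condition of Proposition~\ref{prop:distance}(1) is violated.

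First I would record that a point $\vp=(0,y_0,z_0)$ of the normal plane lies on the focal locus exactly when $y_0\,(b_2 y_0+a_{2,0}z_0-1)=0$. This single equation factors as the union of the line $L_2=\{y_0=0\}$ and the line $L_1=\{b_2 y_0+a_{2,0}z_0-1=0\}$ in the $yz$-plane. By Remark~\ref{rem:Normal} the null direction is $\partial_v$ and the principal normal vectors are $\pm\partial_z$, so $L_2$ is exactly the principal normal line; this already explains why the focal locus always contains the principal normal line.

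Next I would split into cases according to whether $L_1$ is a genuine line and, if so, whether it is parallel to $L_2$. The defining covector of $L_1$ is $(b_2,a_{2,0})$, so $L_1$ is an honest affine line iff $(a_{2,0},b_2)\ne(0,0)$, and it is parallel to $L_2$ iff $a_{2,0}=0$. Combining this with Remark~\ref{rem:Normal}---the origin is an inflection point iff $a_{2,0}=0$ and a degenerate inflection point iff $a_{2,0}=b_2=0$---gives the three alternatives: if $a_{2,0}\ne0$ the two lines meet transversally (a pair of intersecting lines), proving (1); if $a_{2,0}=0\ne b_2$ then $L_1=\{y_0=1/b_2\}$ is parallel to $L_2$ (two parallel lines), proving (2); and if $a_{2,0}=b_2=0$ the equation of $L_1$ degenerates to $-1=0$, so $L_1=\emptyset$ and the locus reduces to the principal normal line, proving (3).

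Finally, for the last assertion I would invoke Proposition~\ref{prop:distance}(5): $d_{\vp}$ has a $D_4$ or more degenerate singularity at $(0,0)$ exactly when $y_0=0$ and $a_{2,0}z_0=1$. In case (1) we have $a_{2,0}\ne0$, so this system has the unique solution $(y_0,z_0)=(0,1/a_{2,0})$, which is precisely the intersection point $L_1\cap L_2$, as claimed. The argument is essentially bookkeeping built on Proposition~\ref{prop:distance}; the only step requiring a little care is to verify that the factorization really yields two honest lines in the non-degenerate cases and to match the parallel/transversal dichotomy with the (degenerate) inflection conditions, rather than any genuine analytic difficulty.
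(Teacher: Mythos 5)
Your proposal is correct and follows exactly the route the paper intends: the paper gives no written proof, asserting only that the statement follows from Proposition \ref{prop:distance} and the definitions of (degenerate) inflection points, and your argument is precisely that deduction carried out in detail, with the correct identification of the two lines, the parallel/transversal dichotomy via $a_{2,0}$, and the $D_4$ point as $(y_0,z_0)=(0,1/a_{2,0})$.
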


\begin{proof}[Proof of Theorem \ref{thm:Distance}]
Firstly, we remark that the condition \eqref{eq:assumption} and the following condition hold.
\[
(x_0,y_0,z_0) = \lambda\left(0,\,-\dfrac{a_{n+1,1}\cos\theta_0}{\ma(\theta_0)}\,\dfrac{(n+1)!\sin\theta_0}{\ma(\theta_0)}\right)\quad(\lambda \ne0).
\]

(1) We skip the proofs of (1a), (1b) and (1d) because the proofs are similar to that of (1c).
We will only prove (1c).
From the assumption, now we have
\[
y_0\ne0\quad\text{and}\quad a_{n+1,1}\,b_2 \cos\theta_0 - (n+1)!\,a_{2,0}\sin\theta_0 \ne 0.
\]
Since
\[
b_2 y_0 + a_{2,0} z_0 -1 = \dfrac{\lambda(-a_{n+1,1}\,b_2 \cos\theta_0 + (n+1)!\,a_{2,0}\sin\theta_0)}{\ma(\theta_0)}-1 = 0,
\]
we obtain
\[
\lambda = \dfrac{\ma(\theta_0)}{-a_{n+1,1}\,b_2 \cos\theta_0 + (n+1)!\,a_{2,0}\sin\theta_0} = \dfrac1{\tk{1}(0,\theta_0)}.
\]
Then we obtain
\begin{align*}
& b_3 y_0 + a_{3,0} z_0 =  \dfrac{-a_{n+1,1}\,b_3 \cos\theta_0 + (n+1)!\,a_{3,0} \sin\theta_0}{\tk{1}(0,\theta_0)\ma(\theta_0)} = -\dfrac{\Delta_1^{n+1}(\theta_0)}{\tk{1}(\theta_0)\ma(\theta_0)}\\
\intertext{and}
& b_4 y_0^2 + a_{4,0} y_0 z_0 - 3 a_{2,1}^2 z_0^2 - 3(a_{2,0}^2 + b_2^2) y_0\\
& = \dfrac1{\tk{1}(\theta_0)\ma(\theta_0)}\Bigl(a_{n+1,1}\bigl(a_{n+1,1}\,b_4\cos\theta_0 - (n+1)!\,a_{4,0}\sin\theta_0 \bigr)\cos\theta_0 - 12a_{2,1}^2\sin^2\theta_0\\
& \quad - 3 a_{n+1,1}(a_{2,0}^2 + b_2^2)\bigl(a_{n+1,1}\,b_2\cos\theta_0 - (n+1)!\,a_{2,0}\sin\theta_0\bigr)\cos\theta_0\Bigr)\\
& = -\dfrac{a_{n+1,1}\,\Delta_2^{n+1}(\theta_0)}{\tk{1}(\theta_0)\ma(\theta_0)}.
\end{align*}
Therefore, from Proposition \ref{prop:distance}, we conclude that $(0,\theta_0)$ is a first order ridge point relative to $\tv{1}$ over the singularity if and only if $d_{\vp_0}$ has an $A_3$-singularity at $(0,0)$. 
Moreover, since
\[
a_{2,0} y_0 + b_2 z_0 = \dfrac{-a_{n+1,1}\,a_{2,0}\cos\theta_0 + (n+1)!\,b_2\sin\theta_0}{\tk{1}(\theta_0)\ma(\theta_0)} = -\dfrac{\Delta_3^{(n+1)}(\theta_0)}{\tk{1}(\theta_0)\ma(\theta_0)},
\]
$D$ is a $\mathcal{K}$-versal unfolding of $d_{\vp_0}$ if and only if $(0,\theta_0)$ is not a sub-parabolic point relative to $\tv{2}$ over singularity. 

(2) The statements follow immediately from the definition of the focal locus and Proposition \ref{prop:distance}. 
\end{proof}

\section{Wave-fronts and caustics of singular surfaces.}
The wave-front or parallel of a surface in $\R^3$ is the envelope of spheres with the centers on the surface.
On the other hand, the caustic of the surface is the envelope of normal rays to the surface.
It is also the locus of the singular points on the wave-front of the surface.

We define a family of functions $\wtilde D$ on a surface parameterized by a smooth map-germ $g:(\R^2,0)\to(\R^3,0)$ by
\[
\wtilde D:(\R^2,0)\times(\R^3,\vp_0)\to\R,\quad \tilde D(u,v,\vp) = \dfrac12(\|g(u,v)-\vp\| - t_0^2),
\]
where $t_0$ is a non-negative constant.
We define $\wtilde d_{\vp_0}(u,v) = \wtilde D(u,v,\vp_0)$.
The discriminant set of $\wtilde D$ is given by
\[
\mathcal{D}(\wtilde D) = \{\vp\in(\R^3,0)\,|\,\wtilde D = \wtilde D_u = \wtilde D_v = 0\mbox{ for some }(u,v)\in(\R^2,0)\},
\]
which is the wave-front of the surface at a distance $\pm t_0$.
On the other hand, the bifurcation set of $D$ is given by
\[
\mathcal{B}(D) = \{\vp\in(\R^3,0)\,|\,D_u = D_v = D_{u u} D_{v v} - D_{u v}^2 = 0\mbox{ for some }(u,v)\in(\R^2,0)\}, 
\]
which is the caustic of the surface.
It is well-known (see \cite[Theorem 3.4]{Wall1981}, for example) that two $\mathcal{K}$ (resp. $\mathcal{R}^+$)-versal unfoldings of $d_{\vp}$ are $\mathcal{K}$ (resp. $\mathcal{R}^+$)-isomorphic as unfoldings. 
Therefore, when $D$ is $\mathcal{K}$ (resp. $\mathcal{R}^+$)-versal, we can conclude the diffeomorphic types of the wave-fronts (resp. caustics) of our singular surfaces $S$. 
Let us state the simplest case of the conclusion of Theorem 3.1 as following \textR{theorem}:  
\begin{theorem}
\label{cor:wave}
Let $S$ be a singular surface parameterized by $g:(\R^2,0)\to(\R^3,0)$ in the form \eqref{eq:normal_form_corank1}, and let $g$ be $\ma$-equivalent to one of $S_k$, $B_k$, $C_k$ and $F_4$ singularities.
Suppose that $\wtilde\kappa_1(0,\theta_0)\ne0$ and $\vp_0 = \wtilde{\bm{n}}(0,\theta_0)/\wtilde\kappa_1(0,\theta_0)$ where $\theta_0\in(-\pi/2,\pi/2)$. 
\begin{enumerate}
\item
If $(0,\theta_0)$ is not a ridge point relative to $\wtilde{\bm{v}}_1$ over the singularity of $S$, the singularity of the wave-front of $S$ at $\vp_0$ is a cuspidal edge. 
\item 
If $(0,\theta_0)$ is a first order ridge relative to $\wtilde{\bm{v}}_1$ but not a sub-parabolic point relative to $\wtilde{\bm{v}}_2$ over the singularity of $S$, then the singularity of the wave-front and caustic of $S$ at $\vp_0$ is a swallowtail and cuspidal edge, respectively. 
\end{enumerate}
\end{theorem}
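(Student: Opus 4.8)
The plan is to deduce the statement from Theorem~\ref{thm:Distance} together with the standard dictionary relating versal unfoldings of $A_k$-singularities to the diffeomorphism types of their discriminant (wave-front) and bifurcation (caustic) sets, as recalled just before the theorem. So the whole argument is an application of results already established; almost all of the work is in checking hypotheses.

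First I would verify that $\vp_0$ meets every hypothesis of Theorem~\ref{thm:Distance}(1). Because $\theta_0\in(-\pi/2,\pi/2)$ we have $\cos\theta_0\ne0$, so $\wt{\bm{n}}(0,\theta_0)$ is not the principal normal vector; moreover, since $n_{20}=-a_{n+1,1}\cos\theta_0/\ma(\theta_0)\ne0$ while $\lambda:=1/\tk{1}(0,\theta_0)\ne0$, the $y$-coordinate $y_0=\lambda n_{20}$ of $\vp_0$ is nonzero, whence $\vp_0$ is not on the principal normal line. Next, the leading Gaussian-curvature coefficient factors as $K_0=k_{10}k_{20}$ with $k_{10}=\tk{1}(0,\theta_0)\ne0$ by hypothesis and $k_{20}\ne0$ because $a_{n+1,1}\ne0$ (condition \eqref{eq:assumption}) and $\cos\theta_0\ne0$; therefore $K_0\ne0$, so $(0,\theta_0)$ is not a parabolic point over the singularity. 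Finally, writing $\vp_0=\lambda\,\wt{\bm{n}}(0,\theta_0)$, the value $\lambda=1/\tk{1}(0,\theta_0)$ is exactly the one for which $b_2y_0+a_{2,0}z_0-1=0$, as computed in the proof of Theorem~\ref{thm:Distance}, so $\vp_0$ lies on the focal locus.

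With these verifications I would simply read off the contact type of $d_{\vp_0}$ and the versality of $D$ from Theorem~\ref{thm:Distance}(1). In case~(1), $(0,\theta_0)$ is not a ridge point relative to $\tv{1}$, so part~(1b) gives that $d_{\vp_0}$ has an $A_2$-singularity at $(0,0)$ and that $D$ is both $\mathcal{R}^+$- and $\mathcal{K}$-versal; since $\wt{D}=D-t_0^2/2$ differs from $D$ by a constant, the $\mathcal{K}$-versality transfers to the generating family of $\mathcal{D}(\wt{D})$, and the classification of discriminants of $\mathcal{K}$-versal unfoldings shows the wave-front at $\vp_0$ is a cuspidal edge. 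In case~(2), $(0,\theta_0)$ is a first order ridge point but not a sub-parabolic point, so part~(1c) gives that $d_{\vp_0}$ has an $A_3$-singularity, that $D$ is $\mathcal{R}^+$-versal, and that $D$ is moreover $\mathcal{K}$-versal precisely because $(0,\theta_0)$ is not sub-parabolic relative to $\tv{2}$; by the same dictionary the $\mathcal{K}$-versal $A_3$-front $\mathcal{D}(\wt{D})$ is a swallowtail and the $\mathcal{R}^+$-versal $A_3$-caustic $\mathcal{B}(D)$ is a cuspidal edge (cf.\ \cite{Arnold1986} and \cite[Theorem~3.4]{Wall1981}).

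The step I expect to demand the most care is the hypothesis-checking in the first paragraph: confirming simultaneously that $\vp_0$ lies on the focal locus, off the principal normal line, and over a non-parabolic point, since all three follow only after unwinding the relation $\lambda=1/\tk{1}(0,\theta_0)$ and the factorization $K_0=k_{10}k_{20}$. Once that is settled, the determination of the singularity type and of the versality of $D$ are immediate invocations of Theorem~\ref{thm:Distance}, and the passage to cuspidal edges and swallowtails requires nothing beyond the standard normal forms of Legendrian fronts and Lagrangian caustics.
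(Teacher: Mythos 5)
Your proposal is correct and follows exactly the route the paper intends: the paper states this theorem as an immediate consequence of Theorem \ref{thm:Distance} combined with the uniqueness of $\mathcal{K}$- (resp.\ $\mathcal{R}^+$-) versal unfoldings and the standard normal forms of their discriminants and bifurcation sets, giving no further written proof. Your hypothesis-checking (that $\vp_0$ is off the principal normal line, over a non-parabolic point, and on the focal locus via $\lambda=1/\wtilde\kappa_1(0,\theta_0)$) is precisely the computation carried out in the paper's proof of Theorem \ref{thm:Distance}(1c), so nothing is missing.
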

Here, a singularity is called a \textit{cuspidal edge} or \textit{swallowtail} if the corresponding map-germs is $\mathcal{A}$-equivalent to
\[
f_c:=(u^2, u^3, v)\quad \mbox{or}\quad f_s:=(3 u^4 + u^2 v, 4 u^3 + 2 u v, v),
\]
respectively. 

\appendix
\section{Coefficients.}
\begin{align}
\label{eq:blow_n21}
n_{21} & = - \frac{((n+1)!)^2(a_{n+2,1}\cos\theta + (n+2)!\,a_{1,2}\sin\theta)\cos\theta\sin^2\theta}{(n+2)\ma(\theta)^3},\\
n_{31} & = -\frac{(n+1)!\,a_{n+1,1}(a_{n+2,1}\cos\theta + (n+2)!\,a_{1,2}\sin\theta)\cos^2\theta\sin\theta}{(n+2)\ma(\theta)^3}.
\end{align}
\begin{align}
\begin{split}
\label{eq:blow_n22}
n_{22} & =\text{\footnotesize{$\frac1{\ma(\theta)^5}\Biggl[\frac{{a_{n+1,1}}^5\, b_2^2\cos^5\theta}2 - (n+1)!\, a_{n+1,1}^4\,a_{2,0}\,b_2\cos^4\theta\sin\theta$}}\\
&\quad\text{\footnotesize{$-((n+1)!)^2\,a_{n+1,1}
\Biggl(\dfrac{a_{n+3,1}\,a_{n+1,1}}{(n+3)(n+2)} - \dfrac{3a_{n+2,1}^2}{2(n+2)^2} - \dfrac{a_{n+1,1}^2(a_{2,0}^2 + b_2^2)}2\Biggr)\cos^3\theta\sin^2\theta$}}\\
&\quad\text{\footnotesize{$+((n+1)!)^3\,a_{n+1,1}\Biggl(\dfrac{a_{n+2,1}\,a_{1,2}}{n+2} - \dfrac{a_{n+1,1}(a_{2,0}\,b_2 - a_{2,2})}2\Biggr)\cos^2\theta\sin^3\theta$}}\\
&\quad\text{\footnotesize{$- ((n+1)!)^4\Biggl(\dfrac{a_{n+3,1}}{(n+3)(n+2)} - \dfrac{a_{n+1,1}(3a_{1,2}^2 + a_{2,0}^2)}2
\Biggr)\cos\theta\sin^4\theta -\dfrac{((n+1)!)^5\,a_{2,2}}2\sin^5\theta\Biggr]\cos^2\theta$}}\\
&\quad\text{\footnotesize{$ - \epsilon\dfrac{4 a_{0,3}\bigl(a_{2,1}^2\cos^2\theta + 4 \sin^2\theta\bigr)\cos\theta\sin^4\theta}{\ma(\theta)^5}
$}},
\end{split}\\
\begin{split}
\label{eq:blow_n32}
n_{32} & =\text{\footnotesize{$\dfrac1{\ma(\theta)^5}\Biggl[ - (n+1)!\,a_{n+1,1}^2\Biggl(\dfrac{a_{n+3,1}\,a_{n+1,1}}{(n+3)(n+2)}
   - \dfrac{a_{n+2,1}^2}{(n+2)^2}
   + \dfrac{a_{n+1,1}^2\,b_2}2
   \Biggr)\cos^4\theta$}}\\
   &\quad\text{\footnotesize{$
   + ((n+1)!)^2\,a_{n+1,1}^2
   \Biggl(
   \dfrac{2a_{n+2,1}\,a_{1,2}}{n+2}
   + \dfrac{a_{n+1,1}(2a_{2,0}\,b_2 - a_{2,2})}2
   \Biggr)\cos^3\theta\sin\theta$}}\\
   &\quad\text{\footnotesize{$
   - ((n+1)!)^3
   \Biggl(
   \dfrac{a_{n+3,1}\,a_{n+1,1}}{(n+3)(n+2)}
   + \dfrac{a_{n+2,1}^2}{2(n+2)^2}
   - \dfrac{a_{n+1,1}^2(2a_{1,2}^2 - a_{2,0}^2 - b_2^2)}2
   \Biggr)\cos^2\theta\sin^2\theta$}}\\
   &\quad\text{\footnotesize{$
   - ((n+1)!)^4
   \Biggl(
   \dfrac{a_{n+2,1}\,a_{1,2}}{n+2}
   - \dfrac{a_{n+1,1}(2a_{2,0}\,b_2 - a_{2,2})}2
   \Biggr)\cos\theta\sin^3\theta$}}\\
   &\quad\text{\footnotesize{$
   - \dfrac{((n+1)!)^5(a_{1,2}^2 + a_{2,0}^3)\sin^4\theta}2
   \Biggr]\cos^2\theta\sin\theta
   - 
   \dfrac{2 a_{2,1}\,a_{0,3}
   (a_{2,1}^2 \cos^2\theta + 4\sin^2\theta)\cos^2\theta\sin^3\theta}{\ma(\theta)^5}
   $}}.
  \end{split}
 \end{align}

 \begin{align}
  \label{eq:blow_L1}
  \begin{split}
   \
   L_1 &
   =
   \text{\footnotesize{$
   \frac{(-a_{n+1,1}\cdot
   b_3\cos\theta+(n+1)!a_{30}\sin\theta)\cos\theta}{\ma(\theta)}$}}\\
   &\text{\footnotesize{$
   \quad + \dfrac{(n+1)!}{(n+2)\ma(\theta)^3}
   \left(
   \begin{array}{l}
    a_{n+2,1}\,a_{n+1,1}\,a_{2,0} \cos^2\theta\\
     + (n+1)!\bigl((n+2)a_{n+1,1}\,a_{1,2}\,a_{2,0} +
     a_{n+2,1}\,b_2\bigr)\cos\theta\sin\theta\\
    + (n+2)((n+1)!)^2 a_{1,2}\,b_2 \sin^2\theta
   \end{array}
   \right)\cos\theta\sin\theta$}}
  \end{split}
 \end{align}

 \begin{align}
  \begin{split}
   M_1 &
  =
  \text{\footnotesize{$
  \dfrac1{\ma(\theta)}\bigl(a_{n+2,1} \cos\theta +
  (n+1)!\,a_{1,2}\sin\theta\bigr)$}}\\
  & \quad
  \text{\footnotesize{$
  -
  \dfrac{(n+1)a_{n+1,1}^2}{(n+2)\ma(\theta)^3}
  \bigl(a_{n+2,1} \cos\theta + (n+2)!\,a_{1,2}\sin\theta
  \bigr)\cos^{n+1}\theta\sin\theta
  $}}
  \end{split}
 \end{align}
 
 \begin{align}
  N_1 &
  =
  \text{\footnotesize$
  \frac{(n+1)!\,a_{1,2}\cos\theta\sin\theta}{\ma(\theta)}
  -\frac{((n+1)!)^2}{(n+2)\ma(\theta)^3}
  \bigl(
  a_{n+2,1}\cos\theta + (n+2)!\,a_{1,2}\sin\theta
  \bigr)\cos\theta\sin^2\theta,$}
 \end{align}

 \begin{align}
  \begin{split}
   \label{eq:pullback_L2}
   L_2 &
   =
   \text{\footnotesize$
   \frac{(-a_{n+1,1}\,b_4\cos\theta +
   (n+1)!\,a_{4,0}\sin\theta)\cos^2\theta}{2\ma(\theta)}
   - \frac{(n+1)!}{\ma(\theta)^3}
   \Biggl[
   \frac{a_{n+2,1}\,a_{n+1,1}\,a_{3,0}}{n+2}\cos^2\theta$}\\
   &\quad\text{\footnotesize$
   + (n+1)!
   \left(
   \frac{a_{n+2,1}\,b_3}{n+2} + a_{n+1,1}\,a_{3,0}\,a_{1,2}
   \right)\cos\theta\sin\theta
   + ((n+1)!)^2 a_{1,2}\,b_3\sin^2\theta
   \Biggr]\cos^2\theta\sin\theta$}\\
   &\quad\text{\footnotesize$
   +\frac1{\ma(\theta)^5}
   \Biggl[
   \frac{{a_{n+1,1}}^5\,b_2^3}2\cos^5\theta
   - \frac{(n+1)!\,a_{n+1,1}^2\,a_{2,0}}2
   \Biggl(
   \frac{2a_{n+3,1}\,a_{n+1,1}}{(n+3)(n+2)}
   - \frac{2a_{n+2,1}^2}{(n+2)^2}$}\\
   &\quad\text{\footnotesize$
   +3 a_{n+1,1}^2\,b_2^2
   \Biggr)\cos^4\theta\sin\theta
   - ((n+1)!)^2 a_{n+1,1}
   \Biggl(
   \frac{a_{n+3,1}\,a_{n+1,1}\,b_2}{(n+3)(n+2)}
   -\frac{3a_{n+2,1}^2\,b_2}{2(n+2)^2}$}\\
   &\quad\text{\footnotesize$
   -\frac{2a_{n+2,1}\,a_{n+1,1}\,a_{1,2}\,a_{2,0}}{n+2}
   +\frac{a_{n+1,1}^2(a_{2,2}\,a_{2,0} -3a_{2,0}^2\,b_2 - b_2^3)}2
   \Biggr)\cos^3\theta\sin^2\theta$}\\
   &\quad\text{\footnotesize$
   - ((n+1)!)^3
   \Biggl(
   \frac{a_{n+3,1}\, a_{n+1,1}\, a_{2,0}}{(n+3)(n+2)}
   + \frac{a_{n+2,1}^2\, a_{2,0}}{2(n+2)^2}
   - \frac{3a_{n+2,1}\, a_{n+1,1}\, a_{1,2}\,b_2}{n+2}$}\\
   &\quad\text{\footnotesize$
   + \frac{a_{n+1,1}^2(a_{2,2}\,b_2 - 2 a_{1,2}^2\, a_{2,0} + a_{20}^3 - 3
   a_{2,0}\, b_2^2)}2
   \Biggr)\cos^2\theta\sin^3\theta
   - ((n+1)!)^4
   \Biggl(
   \frac{a_{n+3,1}\, b_2}{(n+3)(n+2)}$}\\
   &\quad\text{\footnotesize$
   + \frac{a_{n+2,1}\, a_{1,2}\, a_{2,0}}{n+2}
   + \frac{a_{n+1,1}(a_{2,2}\, a_{2,0} - 3 a_{1,2}^2\, b_2 - 3
   a_{2,0}^2\, b_2)}2
   \Biggr)\cos\theta\sin^4\theta$}\\
   &\quad\text{\footnotesize$
   -\frac{((n+1)!)^5(a_{2,2}\, b_2 + a_{1,2}^2\, a_{2,0} +
   a_{2,0}^3)\sin^5\theta}2
   \Biggr]\cos^2\theta
   + \epsilon
   \Biggl[
   \dfrac{2a_{2,1}\cos\theta\sin^2\theta}{\ma(\theta)}$}\\
   &\quad\text{\footnotesize$
   -\dfrac{2a_{0,3}}{\ma(\theta)^5}
   \Biggl(
   a_{2,1}^3\,a_{2,0}\cos^3\theta
   + 2 a_{2,1}^2\,b_2 \cos^2\theta\sin\theta
   + 4 a_{2,1}\,a_{2,0}\cos\theta\sin^2\theta$}\\
   &\quad\text{\footnotesize$
   + 8b_2 \sin^3\theta\Biggr)\cos\theta\sin^3\theta
   \Biggr]$},
  \end{split}
 \end{align}

 \begin{align}
  \begin{split}
   \label{eq:pullback_M2}
   M_2 &
   =
   \text{\footnotesize$
   \dfrac1{\ma(\theta)}
   \left(
   \dfrac{a_{n+3,1}\cos\theta}{n+2}
   + (n+1)!\,a_{2,2}\sin\theta
   \right)\cos^{n+1}\theta\sin\theta
      -\dfrac{a_{n+1,1}}{\ma(\theta)^3}
   \Biggl[
   \dfrac{a_{n+2,1}^2\cos^2\theta}{n+2}$}\\
   &\quad\text{\footnotesize$
   + \dfrac{(n+3)(n+1)!\,a_{n+2,1}\,a_{1,2}\cos\theta\sin\theta}{n+2}
   + ((n+1)!)^2\,a_{1,2}^2\sin^2\theta
   \Biggr]\cos^{n+2}\theta\sin\theta$}\\
   &\quad\text{\footnotesize$
   - \dfrac{(n+1)a_{n+1,1}}{\ma(\theta)^5}
   \Biggl[
   a_{n+1,1}^2
   \left(
   \dfrac{a_{n+3,1}\,a_{n+1,1}}{(n+3)(n+2)}
   - \dfrac{a_{n+2,1}^2}{(n+2)^2}
   + \dfrac{a_{n+1,1}\,b_2^2}2
   \right)\cos^4\theta$}\\
   &\quad\text{\footnotesize$
   -(n+1)!\,a_{n+1,1}^2
   \left(
   \dfrac{2a_{n+2,1}\,a_{1,2}}{n+2}
   - \dfrac{a_{n+1,1}\,a_{2,2}}2
   + a_{n+1,1}\,a_{2,0}\,b_2
   \right)\cos^3\theta\sin\theta$}\\
   &\quad\text{\footnotesize$
   +((n+1)!)^2
   \left(
   \dfrac{a_{n+3,1}\,a_{n+1,1}}{(n+3)(n+2)}
   + \dfrac{a_{n+2,1}^2}{2(n+2)^2}
   - \dfrac{a_{n+1,1}^2(2a_{1,2}^2 - a_{2,0}^2 - b_2^2)}2
   \right)\cos^2\theta\sin^2\theta$}\\
   &\quad\text{\footnotesize$
   +((n+1)!)^3
   \left(
   \dfrac{a_{n+2,1}\,a_{1,2}}{n+2}
   + \dfrac{a_{n+1,1}(a_{2,2} - 2 a_{2,0}\,b_2)}2
   \right)\cos\theta\sin^3\theta$}\\
   &\quad\text{\footnotesize$
   + \dfrac{((n+1)!)^4\,a_{n+1,1}(a_{1,2}^2 + a_{2,0}^2)\sin^4\theta}2
   \Biggr]\cos^{n+2}\theta\sin\theta$}\\
   &\quad\text{\footnotesize$
   - \dfrac{2a_{2,1}^2\,a_{0,3}(a_{2,1}^2\cos^2\theta +
   4\sin^2\theta)\cos^3\theta\sin^3\theta}{\ma(\theta)^5}$},
  \end{split}
 \end{align}
 
 \begin{align}
  \begin{split}
  \label{eq:blow_N2}
   N_2 &
   =
   \text{\footnotesize$
   \frac{(n+1)!\,a_{2,2}\cos^2\theta\sin\theta}{2\ma(\theta)}
   - \frac{(n+1)!\,a_{n+1,1}\,a_{1,2}}{\ma(\theta)^3}
   \Biggl(
   \frac{a_{n+2,1}\cos\theta}{n+2} + (n+1)!\,a_{1,2}\sin\theta
   \Biggr)\cos^3\theta\sin\theta$}\\
   &\quad\text{\footnotesize$
   + \frac1{\ma(\theta)^5}\biggl[
   \frac{a_{n+1,1}^5\, b_2^2\cos^5\theta}2
   - (n+1)!\, a_{n+1,1}^4\, a_{2,0}\,b_2\cos^4\theta\sin\theta$}\\
   &\quad\text{\footnotesize$
   + ((n+1)!)^2\,a_{n+1,1}
   \Biggl(
   -\frac{a_{n+3,1}\, a_{n+1,1}}{(n+3)(n+2)}
   +\frac{3 a_{n+2,1}^2}{2(n+2)^2}
   +\frac{a_{n+1,1}^2\, a_{2,0}^2}2
   +\frac{a_{n+1,1}^2\, b_2^2}2
   \Biggr)\cos^3\theta\sin^2\theta$}\\
   &\quad\text{\footnotesize$
   + ((n+1)!)^3\,a_{n+1,1}
   \Biggl(
   \frac{3a_{n+2,1}\, a_{1,2}}{n+2}
   - \frac{a_{n+1,1}\, a_{2,2}}2
   - a_{n+1,1}\, a_{2,0}\,b_2
   \Biggr)\cos^2\theta\sin^3\theta$}\\
   &\quad\text{\footnotesize$
   - ((n+1)!)^4
   \Biggl(
   \frac{a_{n+3,1}}{(n+3)(n+2)}
   - \frac{3a_{n+1,1}\, a_{1,2}^2}2
   - \frac{a_{n+1,1}\, a_{2,0}^2}2
   \Biggr)\cos\theta\sin^4\theta$}\\
   &\quad\text{\footnotesize$
   - \frac{((n+1)!)^5\,a_{2,2}\sin^5\theta}2\biggr]\cos^2\theta$}\\
   &\quad\text{\footnotesize$
   + \varepsilon
   \Biggl(
   \dfrac{2 a_{0,3}\cos\theta\sin^2\theta}{\ma(\theta)}
   - \dfrac4{\ma(\theta)^5}
   (4a_{0,3}\sin^2\theta + a_{2,1}^2\,a_{0,3}\cos^2\theta)
   \Biggr)\cos\theta\sin^4\theta$}
 \end{split}
 \end{align}

 \begin{align}
  \begin{split}
   \label{eq:blow_k11}
   k_{11} &
   =
   \text{\footnotesize$
   \frac{(-a_{n+1,1}\, b_3\cos\theta +
   (n+1)!\,a_{3,0}\sin\theta)\cos\theta}{\ma(\theta)} 
   - \frac{(n+1)!}{\ma(\theta)^3}
   \Biggl[
   \frac{a_{n+2,1}\, a_{n+1,1}\, a_{2,0}\cos^2\theta}{n+2}$}\\
   &\quad\text{\footnotesize$
   + (n+1)!
   \Biggl(a_{n+1,1}\, a_{1,2}\,a_{2,0} + \frac{a_{n+2,1}\, b_2}{n+2}
   \Biggr)\cos\theta\sin\theta
   + ((n+1)!)^2\,a_{1,2}\,b_2\sin^2\theta
   \Biggr]\cos\theta\sin\theta$},
  \end{split}
  \end{align}
  
  \begin{align}
  \begin{split}
   \label{eq:blow_k12}
   k_{12} &
   =
   \text{\footnotesize$
   \frac1{2\ma(\theta)}
   \Biggl(
   a_{n+1,1}(2a_{2,0}^2 b_2  + 2b_2^3 - b_4)\cos\theta 
   - (n+1)!(2a_{2,0}\,b_2^2 + 2a_{2,0}^3 - a_{4,0})\sin\theta
   \Biggr)\cos^2\theta$}\\
   &\quad\text{\footnotesize$
   -\frac{(n+1)!}{\ma(\theta)^3}
   \Biggl[
   \frac{a_{n+2,1}\, a_{n+1,1}\, a_{3,0} \cos^2\theta}{n+2}
   + (n+1)!
   \left(
   a_{n+1,1}\,a_{3,0}\,a_{1,2} + \dfrac{a_{n+2,1}\,b_3}{n+2}
   \right)\cos\theta\sin\theta$}\\
   &\quad\text{\footnotesize$
   + ((n+1)!)^2 a_{n+1,1}\, a_{3,0}\, a_{1,2} \sin^2\theta
   \Biggr]\cos^2\theta\sin\theta 
   + \frac1{\ma(\theta)^5}\Biggl[
   \frac{a_{n+1,1}^5\, b_2^3\cos^5\theta}2$}\\
   &\quad\text{\footnotesize$
   - (n+1)!\,a_{n+1,1}^2\, a_{2,0}
   \Biggl(
   \frac{a_{n+3,1}\, a_{n+1,1}}{(n+3)(n+2)}
   - \frac{a_{n+2,1}^2}{(n+2)^2}
   + \frac{3a_{n+1,1}\, b_2^2}2
   \Biggr)\cos^4\theta\sin\theta$}\\
   &\quad\text{\footnotesize$
   - ((n+1)!)^2 a_{n+1,1}
   \Biggl(
   \frac{a_{n+3,1}\, a_{n+1,1}\, b_2}{(n+3)(n+2)}
   - \frac{3 a_{n+2,1}^2\, b_2}{2(n+2)^2}
   - \frac{2a_{n+2,1}\, a_{n+1,1}\, a_{1,2}\, a_{2,0}}{n+2}$}\\
   &\quad\text{\footnotesize$
   - \frac{a_{n+1}^2(3a_{2,0}^2\,b_2 + b_2^3 - a_{2,2}\,a_{2,0})}2
   \Biggr)\cos^3\theta\sin^2\theta
   - ((n+1)!)^3
   \Biggl(
   \frac{a_{n+3,1}\, a_{n+1,1}\, a_{2,0}}{(n+3)(n+2)}
   - \frac{a_{n+2,1}^2\, a_{2,0}}{2(n+2)}$}\\
   &\quad\text{\footnotesize$
   - \frac{3a_{n+2,1}\, a_{n+1,1}\, a_{1,2}\,b_2}{n+2}
   - \frac{a_{n+1,1}^2 (2a_{1,2}^2\, a_{2,0} - a_{2,0}^3 - 3
   a_{2,0}\,b_2^2 - a_{2,2}\,b_2)}2
   \biggr)\cos^2\theta\sin^3\theta$}\\
   &\quad\text{\footnotesize$
   - ((n+1)!)^4
   \Biggl(
   \frac{a_{n+3,1}\, b_2}{(n+3)(n+2)}
   + \frac{a_{n+2,1}\, a_{1,2}\,a_{2,0}}{n+2}
   - \frac{a_{n+1,1}(3a_{1,2}^2\,b_2 + 3 a_{2,0}^2\,b_2 -
   a_{2,2}\,a_{2,0})}2 
   \Biggr)\cos\theta\sin^4\theta$}\\
   &\quad\text{\footnotesize$
   -\frac{((n+1)!)^5
   (a_{1,2}^2\,a_{2,0} + a_{2,2}\,b_2 + a_{2,0}^3)\theta\sin^5\theta}2 
   \Biggr]\cos^2\theta
   + \varepsilon
   \Biggl[
   \dfrac{6a_{2,1}\cos\theta\sin^2\theta}{\ma(\theta)}$}\\
   &\quad\text{\footnotesize$
   - \dfrac{a_{0,3}}{\ma(\theta)^5}
   \Biggl(
   a_{2,1}^3\,a_{2,0}\cos^3\theta
   + 2a_{2,1}^2\,b_2\cos^2\theta\sin\theta
   + 4a_{2,1}\,a_{2,0}\cos\theta\sin^2\theta
   + 8b_2 \sin^3\theta
   \Biggr)\cos\theta\sin^3\theta
   \Biggr]$},
  \end{split}\\
  \begin{split}
   \label{eq:blow_k21}
   k_{21} &
   =
   \text{\footnotesize{$
   \dfrac{((n+1)!)^2}{(n+2)\ma(\theta)^5\cos^{2n}\theta}
   \Biggl(
   2a_{n+2,1}\,a_{n+1,1}^2 + 3 (n+2)!\,a_{n+1,1}^2\,a_{1,2}\tan\theta
   - ((n+1)!)^2 a_{n+2,1}\tan^2\theta
   \Biggr)$}}.
  \end{split}
 \end{align}

 \begin{align}
  \begin{split}
   \label{eq:alpha}
   \xi &
   =
   \text{\footnotesize$
   \frac1{\ma(\theta)^3}
   \biggl((n+2)!\,a_{n+1,1}^2\,a_{1,2}\cos^4\theta
   - a_{n+2,1}
   \bigl(((n+1)!)^2 + a_{n+1,1}^2 \bigr)\cos^3\theta\sin\theta$}\\ 
   &\quad\text{\footnotesize$
   - 2((n+1)!)^2\,a_{n+2,1}\cos\theta\sin^3\theta
   - (n+2)((n+1)!)^3\,a_{1,2}\sin^4\theta\biggr)\sin\theta$},
  \end{split}
 \end{align}
 \begin{align}
  \begin{split}
   \label{eq:beta}
   \eta &
   =
   \text{\footnotesize$
   \dfrac1{\ma(\theta)}
   \Biggl[
   -\dfrac{a_{n+1,1}^2\,a_{2,0}\,b_2}{(n+1)!}\cos^2\theta
   +\left(a_{n+1,1}(a_{2,0}^2 - b_2^2)
   - \dfrac{a_{n+3,1}}{n+2}\right)\cos\theta\sin\theta$}\\
   &\quad\text{\footnotesize$
   +\dfrac12(n+1)!(2a_{2,0}\,b_2 + (n+3)\,a_{2,2})\sin^2\theta
   \Biggr]\cos^2\theta
   +\dfrac{a_{n+1,1}}{\ma(\theta)^3}
   \Biggl[
   \dfrac{a_{n+2,1}}{n+2}\cos^2\theta$}\\
   &\quad\text{\footnotesize$
   + 2 (n+1)!\,a_{n+2,1}\,a_{1,2}\cos\theta\sin\theta
   + (n+2)!(n+1)!\,a_{1,2}^2\sin^2\theta
   \Biggr]\cos^3\theta\sin\theta$}\\
   &\quad\text{\footnotesize$
   + \dfrac{(n+1)\,a_{n+1,1}^3}{\ma(\theta)^5}
   \Biggl[
   \left(
   \dfrac{a_{n+3,1\,}a_{n+1,1}}{(n+3)(n+2)}
   - \dfrac{a_{n+2,1}^2}{(n+2)^2}\right)\cos^5\theta$}\\
   &\quad\text{\footnotesize$
   +(n+1)((n+1)!)^2\,a_{n+1,1}^3
   \left(
   \dfrac{a_{n+1,1}\,a_{2,2}}2 - \dfrac{2(a_{n+2,1}\,a_{1,2})}{n+2}
   \right)\cos^4\theta\sin\theta$}\\
   &\quad\text{\footnotesize$
   (n+1)((n+1)!)^2\,a_{n+1,1}
   \left(
   \dfrac{2a_{n+3,1}\,a_{n+1,1}}{(n+3)(n+2)}
   - \dfrac{a_{n+2,1}^2}{(n+2)^2}
   - a_{n+1,1}^2\,a_{1,2}^2
   \right)\cos^3\theta\sin^2\theta$}\\
   &\quad\text{\footnotesize$
   - (n+1)((n+1)!)^3\,a_{n+1,1}
   \left(
   \dfrac{a_{n+2,1}\,a_{1,2}}{n+2} - a_{n+1,1}\,a_{2,2}
   \right)\cos^2\theta\sin^3\theta$}\\
   &\quad\text{\footnotesize$
   + (n+1)((n+1)!)^4
   \left(
   \dfrac{a_{n+3,1}}{(n+3)(n+2)} - a_{n+1,1}\,a_{1,2}^2
   \right)\cos\theta\sin^4\theta$}\\
   &\quad\text{\footnotesize$
   + \dfrac12(n+1)((n+1)!)^5\,a_{2,2}\sin^5\theta
   \Biggr]\cos^2\theta\sin\theta
   - \epsilon
   \dfrac{2 a_{0,3}\cos\theta\sin^3\theta}{\ma(A)}$}.
  \end{split}
 \end{align}

\end{document}